\documentclass[a4paper]{amsart}

\usepackage[utf8]{inputenc}
\usepackage[english]{babel}

\usepackage[T1]{fontenc}
\usepackage{lmodern}  

\usepackage{my-math-symbol}
\usepackage{my-cleveref}
\usepackage{my-equation-numbering}
\usepackage{comment}
\usepackage{tikz}
\usetikzlibrary{cd}
\usepackage[colorinlistoftodos]{todonotes}

\usepackage[non-sorted-cites,initials,alphabetic,nobysame]{amsrefs}

\AtBeginDocument{%
	\def\MR#1{}
}

\title{CR Yamabe constant and inequivalent CR structures}
\author{Chanyoung Sung}
\address{Department of Mathematics education, Korea National University of Education, Cheongju, Korea}
\email{cysung@kias.re.kr}

\author{Yuya Takeuchi}
\address{Division of Mathematics, Institute of Pure and Applied Sciences, University of Tsukuba, 1-1-1 Tennodai, Tsukuba, Ibaraki 305-8571 Japan}
\email{ytakeuchi@math.tsukuba.ac.jp, yuya.takeuchi.math@gmail.com}
\thanks{The second author was supported by JSPS KAKENHI Grant Number JP21K13792.}

\subjclass[2020]{32V20, 32V05, 57K43}

\keywords{CR Yamabe constant, CR manifold, pseudohermitian manifold}

\begin{document}

\begin{abstract}
	The CR Yamabe constant is an invariant of a compact strongly pseudoconvex CR manifold
	and plays an important role in CR geometry.
	We show some integral formulae of the CR Yamabe constant.
	We also construct an infinite-dimensional family of strongly pseudoconvex CR structures
	with varying CR Yamabe constants
	and a compact simply-connected manifold
	admitting two strongly pseudoconvex CR structures
	with different signs of the CR Yamabe constant.
\end{abstract}

\maketitle

\section{Introduction}
\label{section:introduction}

The Yamabe problem,
which is one of the most important problems in conformal geometry,
asks whether there exists a Riemannian metric in a given conformal class
minimizing the Yamabe functional.
The infimum of this functional defines a conformal invariant called the Yamabe constant.
The Yamabe constant is a fundamental invariant in conformal geometry,
and there are intensive research on this invariant.
It is known that every compact manifold of dimension greater than $2$
has a continuous family of conformal structures with all different Yamabe constants.
Moreover,
every compact manifold of dimension greater than $2$
admits a conformal structure with negative Yamabe constant.
Furthermore,
there exist some integral formulae of this invariant,
which have been useful for computing not only the Yamabe constants
but also other curvature parts of $3$- and $4$-manifolds~\cites{LeBrun1999,Sung2012,Sung2021}.

Jerison and Lee~\cite{Jerison-Lee1987} have considered a CR analog of the Yamabe problem,
known as the \emph{CR Yamabe problem}.
The CR Yamabe problem asks whether
on any compact strongly pseudoconvex CR manifold $(X, H, J)$ of dimension $2 n + 1$,
there exists a contact form $\theta$ minimizing the functional
\begin{equation}
	\mathfrak{F}(\theta)
	\coloneqq \frac{\int_{X} R_{\theta} \, d \mu_{\theta}}
		{(\int_{X} \, d \mu_{\theta})^{n / (n + 1)}},
\end{equation}
where $R_{\theta}$ is the pseudohermitian scalar curvature for $\theta$
and $d \mu_{\theta} = \theta \wedge (d \theta)^{n}$.
The infimum $\lambda(X, H, J)$ of the above functional is an invariant of the CR manifold $(X, H, J)$
and called the \emph{CR Yamabe constant} of $(X, H, J)$;
we will simply write $\lambda(X)$ when the CR structure on $X$ is clear from the context.
Just as in the Yamabe problem,
one has
\begin{equation}
	\lambda(X)
	\leq \lambda(S^{2 n + 1})
	= 2 n (n + 1) \pi
\end{equation}
for the standard CR structure on $S^{2 n + 1}$,
and the CR Yamabe problem is solvable when $\lambda(X) < \lambda(S^{2 n + 1})$~\cite{Jerison-Lee1987}.
There are intensive researches on conditions when $\lambda(X) < \lambda(S^{2 n + 1})$ holds
for $X$ not CR equivalent to $S^{2 n + 1}$;
see~\cites{Jerison-Lee1988, Jerison-Lee1989, Cheng-Chiu-Yang2014, Cheng-Malchiodi-Yang2017, Takeuchi2020-Paneitz, Cheng-Malchiodi-Yang2023} for related results.
Every minimizer $\theta$ has constant $R_{\theta}$,
and when $\lambda(X) \leq 0$,
any $\theta$ with constant $R_\theta$ is a CR Yamabe minimizer,
which is unique up to a constant.
Moreover,
an Einstein contact form is also a CR Yamabe minimizer;
see \cref{lem:Einstein-is-CR-Yamabe-minimizer}.
In a similar way to the Yamabe constant,
the CR Yamabe constant can be written as various integral formulae;
see \cref{thm:integral-formulae-of-CR-Yamabe-constant}.
This may be useful for estimating norms of parts of pseudohermitian curvature tensor
as in the Riemannian case~\cite{Sung2021}.

It is natural to ask whether a manifold admitting one CR structure
has abundant other CR structures with all different CR Yamabe constants
or CR structures with negative CR Yamabe constant.
In comparison to the Riemannian case,
the difficulty lies in imposing the integrability condition to an almost CR structure,
which obstructs generic deformations of almost CR structures.
In this paper,
we will construct an infinite-dimensional family of strongly pseudoconvex CR structures
with varying CR Yamabe constants.
To this end,
we consider an $n$-dimensional compact Hodge manifold $(M, J, \omega)$ with constant scalar curvature.
Denote the space of \Kahler potentials in the class $[\omega]$ by
\begin{equation}
	\calK
	\coloneqq \Set{\varphi \in C^{\infty}(M) | \omega_{\varphi} \coloneqq \omega + i \del \delb \varphi > 0}
\end{equation}
endowed with the $C^{4}$-topology,
and write $\calF$ for the subset of $\varphi \in \calK$
such that $\omega_{\varphi}$ has constant scalar curvature.
Let $p \colon P_{M} \to M$ be a principal $S^{1}$-bundle over $M$ whose Euler class is $- [\omega]$.
For any $\varphi \in \calK$,
there exists a principal connection $\theta_{\varphi}$ on $P_{M}$ such that
$d \theta_{\varphi} / 2 \pi = p^{\ast} \omega_{\varphi}$.
The complex structure $J$ on $M$ induces a strongly pseudoconvex CR structure $p^{\ast} J$
on $H_{\varphi} \coloneqq \Ker \theta_{\varphi}$;
see \cref{prop:circle-bundle-over-Hodge-manifold}.
This gives an infinite-dimensional family of pseudohermitian manifolds
$(P_{M}, H_{\varphi}, p^{\ast} J, \theta_{\varphi} / 2 \pi)$ underlying the same manifold $P_{M}$.

\begin{theorem}
\label{thm:varying-CR-Yamabe-constant}
	Let $(M, J, \omega)$ be an $n$-dimensional compact Hodge manifold.
	Then the map
	\begin{equation}
		\calK \to \bbR;
		\qquad
		\varphi \mapsto \lambda(P_{M}, H_{\varphi}, p^{\ast} J)
	\end{equation}
	is continuous.
	Moreover if $\theta_{0} / 2 \pi$ is a CR Yamabe minimizer,
	then
	\begin{equation}
		\lambda(P_{M}, H_{\varphi}, p^{\ast} J)
		< \lambda(P_{M}, H_{0}, p^{\ast} J)
		= \frac{2 n \pi c_{1}(M) \cup [\omega]^{n - 1}}{([\omega]^{n})^{n / (n + 1)}}.
	\end{equation}
	for any $\varphi \in \calK \setminus \calF$.
	The assumption holds if $\omega$ has constant non-positive scalar curvature
	or it defines a \Kahler-Einstein metric.
\end{theorem}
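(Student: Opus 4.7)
My strategy rests on the observation that the distinguished contact form $\theta_{\varphi}/2\pi$ realizes a \emph{topologically determined} value of the CR Yamabe functional for every $\varphi \in \calK$. By the Boothby--Wang-type curvature identity of \cref{prop:circle-bundle-over-Hodge-manifold}, $R_{\theta_{\varphi}/2\pi}$ is proportional to $p^{\ast} s_{\omega_\varphi}$, and $d\mu_{\theta_{\varphi}/2\pi}$ fiber-integrates to a multiple of $\omega_\varphi^n$. Combined with the Chern--Weil identity expressing $\int_M s_{\omega_\varphi}\, \omega_\varphi^n$ in terms of $c_1(M) \cup [\omega]^{n-1}$, this yields
\begin{equation*}
	\mathfrak{F}(\theta_{\varphi}/2\pi) = \frac{2 n \pi\, c_1(M) \cup [\omega]^{n-1}}{([\omega]^n)^{n / (n + 1)}},
\end{equation*}
a constant independent of $\varphi$. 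Consequently $\lambda(P_M, H_\varphi, p^{\ast} J)$ is bounded above by this constant for every $\varphi$, with equality exactly when $\theta_{\varphi}/2\pi$ is itself a CR Yamabe minimizer.

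Under the hypothesis that $\theta_{0}/2\pi$ is a CR Yamabe minimizer, this gives the claimed equality at $\varphi = 0$. The strict inequality for $\varphi \in \calK \setminus \calF$ then follows by contradiction: equality would force $\theta_{\varphi}/2\pi$ to attain the infimum and hence be a CR Yamabe minimizer, so by the Euler--Lagrange property recalled in the introduction $R_{\theta_{\varphi}/2\pi}$, and therefore $s_{\omega_\varphi}$, would be constant, contradicting $\varphi \notin \calF$. To verify the two sufficient conditions on $\omega$: if $\omega$ has constant non-positive scalar curvature, the topological constant above is $\leq 0$, so $\lambda(P_M, H_0, p^{\ast} J) \leq 0$, and the uniqueness statement in the introduction promotes $\theta_{0}/2\pi$ (which has constant pseudohermitian scalar curvature) to a minimizer; if $\omega$ is \Kahler-Einstein, the same curvature computation shows $\theta_{0}/2\pi$ is pseudohermitian Einstein, hence a minimizer by \cref{lem:Einstein-is-CR-Yamabe-minimizer}.

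Continuity of $\varphi \mapsto \lambda(P_M, H_\varphi, p^{\ast} J)$ is the remaining task. Upper semi-continuity is routine: given a near-minimizer $u^{2/n} \theta_{\varphi_0}/2\pi$ at $\varphi_0$, the same positive $u$ defines a contact form $u^{2/n} \theta_\varphi/2\pi$ for each nearby $\varphi$ (since $\ker\theta_\varphi = H_\varphi$), and the functional evaluated there varies continuously with $\varphi$ in the $C^4$-topology, as all curvature and volume data depend continuously on $\omega_\varphi$. The main obstacle is lower semi-continuity, which I plan to establish by CR concentration-compactness: for $\varphi_n \to \varphi_0$, choose near-minimizers $u_n$ normalized in $L^{2 + 2/n}$; the uniform Folland--Stein Sobolev bound produces a weak subsequential limit $u_\infty$. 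If $u_\infty$ retains positive $L^{2+2/n}$-mass it serves, after renormalization, as an admissible competitor for $\lambda(P_M, H_{\varphi_0}, p^{\ast} J)$, and the desired inequality follows from weak lower semi-continuity of the Folland--Stein energy; otherwise concentration at points occurs, and the local model being the Heisenberg group forces $\liminf_n \lambda(P_M, H_{\varphi_n}, p^{\ast} J) \geq \lambda(S^{2n+1}) \geq \lambda(P_M, H_{\varphi_0}, p^{\ast} J)$. Making this dichotomy rigorous in the CR setting is the principal technical hurdle; every other ingredient is either a direct curvature computation or appeals to a result already cited in the paper.
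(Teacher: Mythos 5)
Your handling of the second half of the theorem coincides with the paper's: the fiber-integration computation showing that $\mathfrak{F}(\theta_{\varphi}/2\pi)$ equals the topological constant $2n\pi\,c_{1}(M)\cup[\omega]^{n-1}/([\omega]^{n})^{n/(n+1)}$, the observation that equality $\lambda(P_{M},H_{\varphi},p^{\ast}J)=\mathfrak{F}(\theta_{\varphi}/2\pi)$ would make $\theta_{\varphi}/2\pi$ a CR Yamabe minimizer and hence force $S(\omega_{\varphi})$ to be constant, and the two sufficient conditions (non-positive constant scalar curvature via the fact that any constant-scalar-curvature contact form is a minimizer when $\lambda\leq 0$, and the \Kahler-Einstein case via \cref{lem:Einstein-is-CR-Yamabe-minimizer}) all match the paper's argument. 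That part is correct.

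The genuine gap is in the continuity clause, which you have not actually proved: you reduce lower semicontinuity to a CR concentration-compactness dichotomy and then explicitly defer its verification as ``the principal technical hurdle.'' That machinery is both absent from your write-up and unnecessary. The paper's route (\cref{prop:continuity-of-CRYc} together with \cref{lem:continuity-of-CRYc}) is elementary and, crucially, symmetric. First, a Gray-stability isotopy pulls all the contact structures $H_{\varphi_{i}}$ back to the fixed distribution $H_{\varphi_{0}}$ --- a step your sketch also skips, and which is needed because the horizontal energy $\abs{du}_{\theta}^{2}=L_{\theta}^{\ast}(du|_{H},du|_{H})$ depends on the contact distribution and not only on the contact form. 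After that reduction one has, for $i$ large, two-sided comparisons $(1+\varepsilon)^{-1}L_{\theta}<L_{\theta_{i}}<(1+\varepsilon)L_{\theta}$, likewise for the volume forms, and $\abs{R_{\theta_{i}}-R_{\theta}}<\varepsilon$. Transplanting an arbitrary test function $u$ then gives $\mathfrak{F}(u^{2/n}\theta_{i})\leq(1+\varepsilon)^{2+n/(n+1)}\mathfrak{F}(u^{2/n}\theta)+C\varepsilon$ with $C$ independent of $u$ (the error term being controlled by the \Holder inequality after normalizing volumes), and taking infima yields $\lambda_{i}\leq(1+\varepsilon)^{2+n/(n+1)}\lambda+C\varepsilon$. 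The same inequality with the roles of $\theta$ and $\theta_{i}$ exchanged is precisely the lower semicontinuity you were missing: your ``routine'' transplantation argument, run in the reverse direction with these uniform constants, already does the whole job, with no compactness of minimizing sequences and no analysis of concentration at points. As written, your proof of the continuity statement is incomplete; replace the concentration-compactness plan with this symmetric comparison.
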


We will also show the existence of a compact simply-connected manifold
admitting two strongly pseudoconvex CR structures
with different signs of the CR Yamabe constant.

\begin{theorem}
\label{thm:inequivalent-CR-manifold}
	For each $n \geq 3$,
	there exists a compact simply-connected $(2 n + 1)$-manifold $X$
	admitting two strongly pseudoconvex CR structures $(H, J)$ and $(\wtH, \wtJ)$
	such that they have different signs of the CR Yamabe constants,
	and $(M, H)$ and $(M, \wtH)$ are not isomorphic as cooriented contact manifolds.
\end{theorem}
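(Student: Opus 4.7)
My plan is to realize $X$ as the total space of two different principal $S^{1}$-bundles, both obtained from the construction in \cref{thm:varying-CR-Yamabe-constant}, by choosing two Hodge manifolds $(M_{+}, J_{+}, \omega_{+})$ and $(M_{-}, J_{-}, \omega_{-})$ of complex dimension $n$ whose associated circle bundles $P_{M_{\pm}}$ are diffeomorphic to a common simply-connected $(2n+1)$-manifold, and for which $c_{1}(M_{\pm}) \cup [\omega_{\pm}]^{n-1}$ have opposite signs.

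For the positive side, I would take $(M_{+}, J_{+}, \omega_{+})$ to be a simply-connected \Kahler--Einstein Fano manifold of complex dimension $n$, e.g.\ $\mathbb{CP}^{n}$ with a rescaling of the Fubini--Study metric so that $[\omega_{+}]$ is integral and primitive. By \cref{lem:Einstein-is-CR-Yamabe-minimizer} the corresponding contact form on $P_{M_{+}}$ is a CR Yamabe minimizer, and the integral formula in \cref{thm:varying-CR-Yamabe-constant} yields $\lambda(P_{M_{+}}) > 0$. For the negative side, I would take $(M_{-}, J_{-}, \omega_{-})$ to be a simply-connected compact \Kahler manifold with $c_{1}(M_{-}) < 0$---for example, a smooth hypersurface of sufficiently high degree in $\mathbb{CP}^{n+1}$, simply connected by the Lefschetz hyperplane theorem---endowed with the \Kahler--Einstein metric of negative Ricci curvature supplied by Aubin--Yau; the same formula then gives $\lambda(P_{M_{-}}) < 0$. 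With a primitive choice of Euler class on each side, both $P_{M_{\pm}}$ are simply connected.

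The decisive technical step is to produce a single simply-connected smooth $(2n+1)$-manifold $X$ carrying both bundle structures. I would try to exploit the flexibility available when $n \geq 3$: first adjust the \Kahler class $[\omega_{-}]$ within the \Kahler cone (using the continuity part of \cref{thm:varying-CR-Yamabe-constant} to preserve the sign of $\lambda$) so that the cohomology ring, characteristic classes, and linking form of $P_{M_{-}}$, as computed via the Gysin sequence, match those of $P_{M_{+}}$; then apply a higher-dimensional surgery-theoretic classification result for simply-connected $(2n+1)$-manifolds to upgrade that agreement of invariants to an actual diffeomorphism. An alternative approach to pursue in parallel would be to realize both $P_{M_{\pm}}$ as links of isolated Gorenstein singularities in the spirit of Boyer--Galicki--Koll\'ar, where explicit Brieskorn-type families of diffeomorphic total spaces carrying Sasakian structures of opposite curvature sign are available for $2n+1 \geq 7$.

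Once $X$ is fixed, non-isomorphism of the two cooriented contact structures follows from comparing first Chern classes of the contact distributions: since $(H_{\pm}, p_{\pm}^{\ast} J_{\pm}) \cong p_{\pm}^{\ast}(T M_{\pm}, J_{\pm})$ as complex vector bundles, one has $c_{1}(H_{\pm}) = p_{\pm}^{\ast} c_{1}(M_{\pm})$, and the opposite signs of these classes relative to the Euler class rule out any coorientation-preserving contactomorphism. The hardest part of the argument is therefore this diffeomorphism identification of the two circle bundles; once it is in place, strong pseudoconvexity and the CR Yamabe signs follow directly from \cref{prop:circle-bundle-over-Hodge-manifold} and \cref{thm:varying-CR-Yamabe-constant}, and the contact non-equivalence becomes a formal consequence.
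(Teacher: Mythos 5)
There is a genuine gap, and it sits exactly where you locate it yourself: the production of a \emph{single} smooth simply-connected $(2n+1)$-manifold carrying both circle-bundle structures. Your proposal defers this to ``a higher-dimensional surgery-theoretic classification result'' after ``adjusting the K\"ahler class so that the invariants match,'' but nothing in the proposal makes this matching plausible, and for the concrete examples you name it fails outright: with a primitive Euler class over $\mathbb{CP}^n$ the total space is $S^{2n+1}$, while the circle bundle over a high-degree hypersurface in $\mathbb{CP}^{n+1}$ has large middle-dimensional cohomology (visible already from the Gysin sequence and the hypersurface's primitive cohomology), so the two total spaces cannot be diffeomorphic. There is no mechanism by which moving $[\omega_-]$ inside the K\"ahler cone repairs this, since the topology of the base is fixed. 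The paper's key idea, which is absent from your proposal, is to start from a homeomorphic but non-diffeomorphic pair of complex surfaces carrying K\"ahler--Einstein metrics of opposite sign (the Barlow surface $B$ and the $8$-point blow-up $R_8$ of $\mathbb{CP}^2$), and to observe that after crossing with $\mathbb{CP}^1$ they become \emph{diffeomorphic} via a diffeomorphism $\psi$ that can be arranged (Wall's result on characteristic vectors plus Jupp's classification of simply-connected $6$-manifolds) to identify the relevant first Chern classes; crossing further with $\mathbb{CP}^{n-3}$ and choosing the K\"ahler forms so that $\psi^{\ast}[\widetilde{\omega}]=[\omega]$ makes the two Euler classes literally equal, so one and the same bundle $P_M$ carries both CR structures. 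Your Brieskorn-link alternative is a legitimate direction in the literature, but as stated it is a pointer, not a proof.

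A second, smaller but real, defect is the contact non-equivalence step. You argue that ``the opposite signs of $c_1(H_{\pm})=p_{\pm}^{\ast}c_1(M_{\pm})$ relative to the Euler class'' preclude a contactomorphism, but a diffeomorphism of $X$ can act nontrivially on $H^2(X;\mathbb{Z})$, and neither ``sign'' nor ``the Euler class'' is intrinsic to $X$ once the bundle projection is forgotten; moreover $p^{\ast}$ kills the class $[\omega]$, so positivity of $c_1(M)$ on $M$ says little about $p^{\ast}c_1(M)$ on $P_M$. One must exhibit a genuinely diffeomorphism-invariant distinction: the paper does this by showing $c_1(P_M,H,J)=0$ while $c_1(P_M,\widetilde{H},\widetilde{J})\neq 0$ when $n=3$, and by a divisibility computation modulo $n-1$ (evaluated against an exceptional curve in $R_8$ and a line in $\mathbb{CP}^{n-3}$, taking into account the ambiguity $k[\omega]$ coming from the Gysin sequence) when $n\geq 4$. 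Your outline would need an analogous invariant computation in whatever example ultimately survives the first step.
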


We remark that the existence of CR structures with different signs of the CR Yamabe constant
on a fixed contact structure remains unresolved.

This paper is organized as follows.
In \cref{section:CR-manifolds},
we recall basic facts on CR manifolds
and show that a principal $S^{1}$-bundle over a Hodge manifold has a canonical strongly pseudoconvex CR structure.
Some integral formulae of the CR Yamabe constant are given in \cref{section:formulae-for-CR-Yamabe-constant}.
In \cref{section:continuity-of-the-CR-Yamabe-constant},
we prove the continuity of the CR Yamabe constant
under suitable deformations of CR structures,
which will be used for the proof of \cref{thm:varying-CR-Yamabe-constant}.
\cref{section:deformations-of-CR-structures} is devoted to
constructions of deformations of strongly pseudoconvex CR structures
with varying CR Yamabe constants.
In \cref{section:construction-of-manifold-with-inequivalent-CR-structures},
we give a proof of \cref{thm:inequivalent-CR-manifold}.

\section{CR manifolds}
\label{section:CR-manifolds}

An almost CR structure on a smooth $(2 n + 1)$-manifold $X$ is a pair $(H, J)$
where $H \subset T X$ is a codimension $1$ smooth subbundle with an almost complex structure $J$.
An almost CR structure is called integrable or a CR structure if
\begin{equation}
	\comm{\Gamma(T^{1, 0} X)}{\Gamma(T^{1, 0} X)} \subset \Gamma(T^{1, 0} X)
\end{equation}
for
\begin{equation}
	T^{1, 0} X
	\coloneqq \Set{v - i J v | v \in H} \subset H \otimes \bbC
	\qquad
	T^{0, 1} X
	\coloneqq \overline{T^{1, 0} X}.
\end{equation}

We shall consider only an orientable CR manifold.
Then one can choose a smooth real-valued $1$-form $\theta$ annihilating exactly $H$,
which is determined up to multiplication by a nowhere vanishing real-valued function on $X$.
By the integrability condition,
$d \theta$ is $J$-invariant;
i.e., a $(1, 1)$-form,
and hence one can introduce the symmetric bilinear form
\begin{equation}
	L_{\theta}
	\coloneqq d \theta(\cdot, J \cdot)
\end{equation}
defined on $H$,
called the \emph{Levi form}.

A CR manifold $(X, H, J)$ is called \emph{strongly pseudoconvex}
if $L_\theta$ is definite for some (and hence all) $\theta$,
and a strongly pseudoconvex CR manifold $(X, H, J)$ with a choice of $\theta$
is called a \emph{pseudohermitian} manifold.
We shall always assume that $X$ is strongly pseudoconvex
and $\theta$ is chosen so that $L_{\theta}$ is positive definite,
unless otherwise specified.
In this case,
the distribution $H$ is a contact structure on $X$ with a contact form $\theta$.
Let $T$ be its Reeb vector field;
i.e., the unique vector field satisfying $\theta(T) = 1$ and $\iota(T) d \theta = 0$.
The Levi form induces a Hermitian metric $L_{\theta}^{\ast}$ on $H^{\ast}$.
The \emph{sublaplacian} $\Delta_{b}$ is defined by
\begin{equation}
	\int_{X} (\Delta_{b} u) v \, d \mu_{\theta}
	= \int_{X} L_{\theta}^{\ast}(d u|_{H}, d v|_{H}) d \mu_{\theta}.
\end{equation}

A set of local $1$-forms $\{\theta^{1}, \cdots, \theta^{n}\}$ of type $(1, 0)$ is called \emph{admissible},
if its restriction to $T^{1, 0} X$ forms a basis of $(T^{1, 0} X)^*$ at each point
and $\theta^{\alpha}(T) = 0$ for all $\alpha$.
For an admissible coframe,
we have
\begin{equation}
\label{eq:Levi-form}
	d \theta
	= i h_{\alpha \ovxb} \theta^{\alpha} \wedge \theta^{\ovxb},
\end{equation}
where $(h_{\alpha \ovxb})$ is a positive-definite hermitian matrix of functions
and $\theta^{\ovxb} = \overline{\theta^{\beta}}$.
We shall always adopt the Einstein convention and use the matrix $(h_{\alpha \ovxb})$
and its inverse $(h^{\alpha \ovxb})$ to raise and lower indices.
The integrability condition of $J$ can be rephrased as
\begin{equation}
	d \theta^{\alpha}
	\equiv 0
	\qquad
	\textrm{mod} \ \theta, \theta^{\gamma}
\end{equation}
along with \eqref{eq:Levi-form}.

A pseudohermitian manifold carries a canonical linear connection,
the \emph{Tanaka-Webster connection} \cites{Tanaka1975,Webster1978},
whose connection 1-forms ${\omega_{\alpha}}^{\beta}$
and torsion forms $\tau^{\alpha}$ of type $(0,1)$ are uniquely determined by the following relations
\begin{equation}
	d \theta^{\beta}
	= \theta^{\alpha} \wedge {\omega_{\alpha}}^{\beta} + \theta \wedge \tau^{\beta},
	\qquad
	\omega_{\alpha \ovxb} + \omega_{\ovxb \alpha}
	= d h_{\alpha \ovxb}
\end{equation}
together with \eqref{eq:Levi-form}.
We call $\tau^{\alpha}$ the \emph{pseudohermitian torsion}.
The whole torsion tensor is composed of $\theta \wedge \tau^{\gamma}$
and $i h_{\alpha \ovxb} \theta^{\alpha} \wedge \theta^{\ovxb}$,
and so it is nowhere vanishing.

The covariant differentiation with respect to this connection is given by
\begin{equation}
	\nabla Z_{\alpha}
	= {\omega_{\alpha}}^{\beta} \otimes Z_{\beta},
	\qquad
	\nabla Z_{\ovxa}
	= {\omega_{\ovxa}}^{\ovxb} \otimes Z_{\ovxb},
	\qquad
	\nabla T
	= 0,
\end{equation}
where a local frame $\{Z_{\alpha}\}$ of $T^{1,0}X$ is dual to $\{\theta^{\alpha}\}$.
Its curvature $2$-forms
\begin{equation}
	{\Omega_{\alpha}}^{\beta}
	\coloneqq d {\omega_{\alpha}}^{\beta} - {\omega_{\alpha}}^{\gamma} \wedge {\omega_{\gamma}}^{\beta}
\end{equation}
may have several types,
but one considers only its $(1, 1)$-part
\begin{equation}
	{{R_{\alpha}}^{\beta}}_{\rho \ovxs} \theta^{\rho} \wedge \theta^{\ovxs}
\end{equation}
to get its \emph{pseudohermitian Ricci tensor}
\begin{equation}
	R_{\rho \ovxs}
	\coloneqq {{R_{\alpha}}^{\alpha}}_{\rho \ovxs}
\end{equation}
by taking its contraction.
Finally its \emph{pseudohermitian scalar curvature} is the metric contraction
${R_{\rho}}^{\rho} = h^{\rho \ovxs} R_{\rho \ovxs}$.
We say $\theta$ to be \emph{Einstein}
if its pseudohermitian torsion is identically zero
and its pseudohermitian Ricci curvature is a constant multiple of the Levi form.

\begin{lemma}
\label{lem:Einstein-is-CR-Yamabe-minimizer}
	Any Einstein contact form is a CR Yamabe minimizer.
\end{lemma}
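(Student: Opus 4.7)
The first observation is that the Einstein hypothesis forces the pseudohermitian scalar curvature to be constant: tracing $R_{\rho \ovxs} = c \, h_{\rho \ovxs}$ with the inverse Levi form gives $R_{\theta} = n c$, itself a constant. In particular
\begin{equation}
	\mathfrak{F}(\theta)
	= R_{\theta} \cdot V_{\theta}^{1 / (n + 1)},
	\qquad
	V_{\theta} \coloneqq \int_{X} d \mu_{\theta},
\end{equation}
and $\theta$ is automatically a critical point of the CR Yamabe functional. The natural approach is then to split on the sign of $R_{\theta}$.

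When $R_{\theta} \le 0$ we have $\mathfrak{F}(\theta) \le 0$, hence $\lambda(X) \le 0$, and the fact recalled just before the lemma---that every contact form of constant pseudohermitian scalar curvature is a CR Yamabe minimizer whenever $\lambda(X) \le 0$---closes this case. The case $R_{\theta} > 0$ is where the real work lies: a torsion-free Einstein contact form is pseudo-Einstein Sasakian, and the plan is to prove a CR analogue of Obata's rigidity theorem to the effect that if $\tilde\theta = u^{2 / n} \theta$ has constant pseudohermitian scalar curvature, then $u$ is constant. Starting from the CR Yamabe equation $\frac{2 (n + 1)}{n} \Delta_{b} u + R_{\theta} u = \tilde R \, u^{(n + 2) / n}$ combined with a pseudohermitian Bochner identity for $u$, the vanishing torsion of $\theta$ together with the trace-free pseudo-Einstein condition should annihilate the error terms and leave an integrated inequality forcing $u$ to be constant. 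Granted this, every CR Yamabe minimizer in the conformal class of $\theta$ is proportional to $\theta$, so $\mathfrak{F}(\theta) = \lambda(X)$.

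The hard step is the Obata-type rigidity in the positive case. The Riemannian Obata theorem admits a short proof using the conformal Killing operator applied to $\nabla u$; the CR analogue is more delicate because the Tanaka-Webster connection carries an intrinsic $i h_{\alpha \ovxb} \theta^{\alpha} \wedge \theta^{\ovxb}$ contribution to its torsion, and the Hessian of $u$ must be decomposed into horizontal and mixed pieces before one integrates by parts. The vanishing of the pseudohermitian torsion of $\theta$ is precisely what keeps this decomposition clean. If this step proves unwieldy, an alternative is to defer to the integral formulae for $\lambda(X)$ proved in \cref{thm:integral-formulae-of-CR-Yamabe-constant}: should such a formula express $\lambda(X)$ as $\mathfrak{F}(\theta)$ minus non-negative curvature expressions that vanish under the Einstein hypothesis, the lemma follows at once.
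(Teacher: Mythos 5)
Your non-positive case is exactly the paper's: an Einstein contact form has constant $R_{\theta}$, and when $\lambda(X) \leq 0$ any contact form of constant pseudohermitian scalar curvature is a minimizer. The problem is the positive case, which you correctly identify as "where the real work lies" but then only sketch. Two things go wrong. First, the CR Obata-type rigidity you propose to prove is not a routine Bochner computation; in the positive case it is a genuinely hard statement (the known versions require additional hypotheses, e.g.\ that the comparison contact form is already a minimizer, and the full conjecture is open in general), so "the vanishing torsion should annihilate the error terms" is not a proof. Second, and more fundamentally, even granting the rigidity your conclusion does not follow: knowing that every \emph{minimizer} in the conformal class is proportional to $\theta$ only helps if a minimizer \emph{exists}, and when $\lambda(X) > 0$ the infimum need not be attained a priori. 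Your sentence "every CR Yamabe minimizer \dots is proportional to $\theta$, so $\mathfrak{F}(\theta) = \lambda(X)$" silently assumes attainment.

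The paper's proof is built precisely to supply that missing existence. It normalizes $R_{\theta} = n(n+1)$, passes to the associated Riemannian metric $g_{\theta}$, which satisfies $\Ric_{g_{\theta}} = 2 n g_{\theta}$ because the torsion vanishes, and applies the Bishop volume inequality to get $\mathfrak{F}(\theta) \leq 2 n (n + 1) \pi = \lambda(S^{2 n + 1})$, with equality only when $(X, g_{\theta})$ is the round sphere, in which case $(X, H, J)$ is the standard CR sphere. In the strict case $\lambda(X) < \lambda(S^{2 n + 1})$, so the CR Yamabe problem is solvable by Jerison--Lee; in either case a minimizer exists, and Wang's uniqueness theorem then shows $\theta$ itself is a minimizer. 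Your fallback via \cref{thm:integral-formulae-of-CR-Yamabe-constant} also cannot close the gap: in the regime $\lambda(X) \geq 0$ that theorem gives only the upper bound $\lambda(X) \leq \norm{R_{\wtxth}}_{L^{r}} \Vol_{\wtxth}(X)^{1/(n+1) - 1/r}$, with equality contingent on solvability, so it presupposes rather than delivers the existence you need.
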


\begin{proof}
	Let $(X, H, J)$ be a compact strongly pseudoconvex CR manifold of dimension $2 n + 1$
	and $\theta$ be an Einstein contact form on $X$.
	If the pseudohermitian scalar curvature is non-positive,
	then it must be a CR Yamabe minimizer.
	If the pseudohermitian scalar curvature is positive,
	we may assume that it is equal to $n (n + 1)$ by homothety.
	Consider the Riemannian metric $g_{\theta}$ on $X$ given by
	\begin{equation}
		g_{\theta}(U, V)
		\coloneqq \frac{1}{2} d \theta(U, J V) + \theta(U) \theta(V),
		\qquad U, V \in T X.
	\end{equation}
	Here we extend $J$ to an endomorphism on $T M$ by $J T = 0$.
	Note that the volume form of $g_{\theta}$ coincides with $(2^{n} n!)^{- 1} \, d \mu_{\theta}$.
	This $g_{\theta}$ satisfies $\Ric_{g_{\theta}} = 2 n g_{\theta}$;
	see \cite{Takeuchi2018}*{Proposition 2.9} for example.
	The Bishop inequality implies that
	\begin{align}
		\mathfrak{F}(\theta)
		= n (n + 1) (2^{n} n ! \Vol_{g_{\theta}}(X))^{\frac{1}{n + 1}}
		&\leq n (n + 1) (2^{n} n ! \Vol_{g_{0}}(S^{2 n + 1}))^{\frac{1}{n + 1}} \\
		&= 2 n (n + 1) \pi,
	\end{align}
	where $g_{0}$ is the standard Riemannian metric on $S^{2 n + 1}$.
	Moreover,
	the equality holds if and only if $(X, g_{\theta})$ is isometric to $(S^{2 n + 1}, g_{0})$.
	In addition,
	if $(X, g_{\theta})$ is isometric to $(S^{2 n + 1}, g_{0})$,
	then $(X, H, J)$ is CR isomorphic to the standard CR sphere;
	see the paragraph after the proof of Proposition 4 in \cite{Wang2015}.
	Therefore $(X, H, J)$ has a CR Yamabe minimizer,
	and $\theta$ is also a CR Yamabe minimizer by \cite{Wang2015}*{Theorem 3}.
\end{proof}

An important example of a strongly pseudoconvex CR manifold
is a principal $S^{1}$-bundle over a Hodge manifold.
Given a Hodge manifold $(M, J, \omega)$;
that is,
its \Kahler class $[\omega]$ is an integral cohomology class,
we consider a principal $S^{1}$-bundle $p \colon P_{M} \to M$ whose Euler class is $- [\omega]$.
Recall that for any $\bbR$-valued principal connection $\theta$ on $P_{M}$,
$d \theta / 2 \pi$ descends to $M$ and its cohomology class coincides with $[\omega]$.
We take a principal connection $\theta$ satisfying $d \theta / 2 \pi = p^{\ast} \omega$,
and consider the lifted almost complex structure
\begin{equation}
	p^{\ast} J \colon H \coloneqq \Ker \theta \to H.
\end{equation}

\begin{proposition}
\label{prop:circle-bundle-over-Hodge-manifold}
	The triple $(P_{M}, H, p^{\ast} J)$ is a strongly pseudoconvex CR manifold.
	Moreover,
	the pseudohermitian scalar curvature of $(P_{M}, H, p^{\ast} J, \theta / 2 \pi)$
	is equal to $p^{\ast} S(\omega)$,
	where $S(\omega)$ is the scalar curvature of $(M, J, \omega)$.
\end{proposition}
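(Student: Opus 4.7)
The plan is to verify integrability and strong pseudoconvexity directly from the definitions, and then to identify the Tanaka-Webster connection on $P_{M}$ with the pullback of the Chern connection on $M$, from which the scalar curvature formula follows by uniqueness. Throughout I would work with the rescaled contact form $\tilde{\theta} \coloneqq \theta/(2\pi)$, so that $d\tilde{\theta} = p^{\ast}\omega$ and the Levi form of $\tilde{\theta}$ is exactly $p^{\ast} g$, where $g$ is the \Kahler metric, with no stray factor of $2\pi$ to track later.

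The integrability check is to push sections of $T^{1,0} P_M$ down along $p_{\ast}$, which restricts to a fiberwise isomorphism on $H$. For $X, Y \in \Gamma(T^{1,0}P_M)$ the projections are of type $(1, 0)$ on $M$, so the horizontal part of $[X, Y]$ automatically lies in $T^{1, 0} P_M$ by integrability of $J$. The vertical part vanishes because
\begin{equation*}
\tilde{\theta}([X, Y]) = - d\tilde{\theta}(X, Y) = - \omega(p_{\ast}X, p_{\ast}Y) = 0,
\end{equation*}
using that $\omega$ is of type $(1, 1)$. Positivity of $L_{\tilde{\theta}} = p^{\ast} g$ then yields strong pseudoconvexity.

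For the curvature computation, I would pick a local holomorphic coframe $\{\check{\theta}^{\alpha}\}$ on $M$ with Chern connection forms $\check{\omega}_{\alpha}{}^{\beta}$, and declare $\theta^{\alpha} \coloneqq p^{\ast}\check{\theta}^{\alpha}$, $\omega_{\alpha}{}^{\beta} \coloneqq p^{\ast}\check{\omega}_{\alpha}{}^{\beta}$, $\tau^{\beta} \coloneqq 0$. The $\{\theta^{\alpha}\}$ are admissible on $P_M$ since they annihilate the vertical Reeb direction, and both the structure equation and the metric compatibility relation characterizing the Tanaka-Webster connection are inherited verbatim by pullback from $M$; by uniqueness these $\omega_{\alpha}{}^{\beta}, \tau^{\beta}$ are the Tanaka-Webster data, and in particular the pseudohermitian torsion vanishes. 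The curvature forms then pull back as $\Omega_{\alpha}{}^{\beta} = p^{\ast}\check{\Omega}_{\alpha}{}^{\beta}$, which is already of type $(1, 1)$ by the \Kahler hypothesis, so no projection onto the $(1,1)$-part is needed, and contracting twice with $p^{\ast} h^{\alpha\bar{\beta}}$ gives $R = p^{\ast} S(\omega)$. The only subtlety I anticipate is conventional: the normalization $\theta/(2\pi)$ is what makes the Levi form matrix upstairs equal $p^{\ast} h_{\alpha\bar{\beta}}$ on the nose, after which the entire Tanaka-Webster construction is forced by uniqueness from the downstairs data, with nothing genuinely new to compute.
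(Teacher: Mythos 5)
Your proposal is correct and follows essentially the same route as the paper: integrability and strong pseudoconvexity are read off from the pullback structure, and the Tanaka--Webster connection is identified, via the uniqueness of its defining structure equations with vanishing torsion, with the pullback of the K\"ahler (Chern $=$ Levi-Civita) connection, so the curvature and its traces pull back to give $R = p^{\ast} S(\omega)$. The only cosmetic difference is that the paper verifies integrability through the admissible-coframe criterion $d\theta^{\alpha} = p^{\ast} d(dz^{\alpha}) = 0$ rather than by splitting the bracket of sections of $T^{1,0}P_{M}$ into horizontal and vertical parts as you do.
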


\begin{proof}
	This result may be well-known for the researchers in Sasakian geometry,
	but we give a proof for the reader's convenience.
	Let $(z^{1}, \dots , z^{n})$ be a holomorphic local coordinate of $M$.
	Then $\theta^{\alpha} \coloneqq p^{\ast} d z^{\alpha}$ defines
	an admissible coframe.
	Since $d \theta^{\alpha} = p^{\ast} d d z^{\alpha} = 0$,
	the almost CR structure $(H, p^{\ast} J)$ is integrable.
	Moreover,
	\begin{equation}
		L_{\theta}(X, X)
		= d \theta(X, (p^{\ast} J) X)
		= 2 \pi \omega(p_{\ast} X, J (p_{\ast} X))
		> 0
	\end{equation}
	for any non-zero $X \in H$,
	which implies that $(M, H, p^{\ast} J)$ is strongly pseudoconvex.

	Consider the Tanaka-Webster connection with respect to $\theta / 2 \pi$.
	The \Kahler form $\omega$ is written as
	\begin{equation}
		\omega
		= i g_{\alpha \ovxb} d z^{\alpha} \wedge d \ovz^{\beta},
	\end{equation}
	where $(g_{\alpha \ovxb})$ is a positive definite Hermitian matrix.
	Since $d \theta / 2 \pi = p^{*} \omega$,
	we have
	\begin{equation}
		d \theta / 2 \pi = i (p^{\ast} g_{\alpha \ovxb}) \theta^{\alpha} \wedge \theta^{\ovxb},
	\end{equation}
	which implies $h_{\alpha \ovxb} = p^{\ast} g_{\alpha \ovxb}$.
	The structure equation on the \Kahler manifold $(M, J, \omega)$ says that
	\begin{equation}
		0
		= d (d z^{\beta})
		= d z^{\alpha} \wedge {\phi_{\alpha}}^{\beta},
		\qquad
		d g_{\alpha \ovxb}
		= \phi_{\alpha \ovxb} + \phi_{\ovxb \alpha},
	\end{equation}
	where ${\phi_{\alpha}}^{\beta}$ is the Levi-Civita connection $1$-form,
	and so
	\begin{equation}
		d \theta^{\beta}
		= \theta^{\alpha} \wedge (p^{\ast} {\phi_{\alpha}}^{\beta}),
		\qquad
		d (p^{\ast} g_{\alpha \ovxb})
		= p^{\ast} \phi_{\alpha \ovxb} + p^{\ast} \phi_{\ovxb \alpha}.
	\end{equation}
	Hence the pull-back connection given by $p^{\ast} {\phi_{\alpha}}^{\beta}$
	coincides with the Tanaka-Webster connection.
	Note that the pseudohermitian torsion is equal to zero
	and $P_{M}$ is Sasakian.

	The curvature $2$-forms of the Levi-Civita connection on $(M, J, \omega)$ are given by
	\begin{equation}
		{\Phi_{\alpha}}^{\beta}
		\coloneqq d {\phi_{\alpha}}^{\beta} - {\phi_{\alpha}}^{\gamma} \wedge {\phi_{\gamma}}^{\beta}
		= {{R_{\alpha}}^{\beta}}_{\rho \ovxs} d z^{\rho} \wedge d z^{\ovxs}.
	\end{equation}
	Its pull-back to $P_{M}$ yields
	\begin{equation}
		p^{\ast} {\Phi_{\alpha}}^{\beta}
		= d (p^{\ast} {\phi_{\alpha}}^{\beta})
			- (p^{\ast} {\phi_{\alpha}}^{\gamma}) \wedge (p^{\ast} {\phi_{\gamma}}^{\beta})
		= (p^{\ast} {{R_{\alpha}}^{\beta}}_{\rho \ovxs}) \theta^{\rho} \wedge \theta^{\ovxs},
	\end{equation}
	which are the curvature $2$-forms of the Tanaka-Webster connection.
	Hence the pseudohermitian Ricci tensor $R_{\rho\bar{\sigma}}$ is given by
	\begin{equation}
		p^{\ast} {{R_{\alpha}}^{\alpha}}_{\rho \ovxs}
		= p^{\ast} \Ric_{\rho \ovxs},
	\end{equation}
	where $\Ric$ is the Ricci tensor of $(M, J, \omega)$,
	and the pseudohermitian scalar curvature is given by
	\begin{equation}
		{R_{\rho}}^{\rho}
		= p^{\ast} {\Ric_{\rho}}^{\rho}
		= p^{\ast} S(\omega),
	\end{equation}
	which completes the proof.
\end{proof}

\section{Formulae for CR Yamabe constant}
\label{section:formulae-for-CR-Yamabe-constant}

As with the Riemannian Yamabe problem,
the functional $\mathfrak{F}(\theta)$ can be rewritten as a functional on $C^{\infty}(X, \bbR_{+})$:
\begin{equation}
\label{eq:Yamabe-functional}
	\mathfrak{F}(u^{2 / n} \theta)
	= \frac{\int_{X} ((2 + 2 / n) \abs{d u}_{\theta}^{2} + R_{\theta} u^{2}) \, d \mu_{\theta}}
		{(\int_{X} u^{2 + 2 / n} \, d \mu_{\theta})^{n / (n + 1)}},
\end{equation}
where $\theta$ is a fixed contact form
and $\abs{d u}_{\theta}^{2} = L_{\theta}^{\ast} (d u|_{H}, d u|_{H})$~\cite{Jerison-Lee1987}.
It follows from this that $\lambda(X) \geq 0$ if $R_{\theta} \geq 0$.
Moreover consider the case $R_{\theta} > 0$.
Suppose to the contrary that $\lambda(X) = 0$.
Then there exists a CR Yamabe contact form $\wtxth = u^{2 / n} \theta$ satisfying $R_{\wtxth} = 0$,
which contradicts \eqref{eq:Yamabe-functional}.
Therefore $\lambda(X) > 0$ if $R_{\theta} > 0$;
see \cite{Wang2003}*{Proposition 3.1} for another proof.

\begin{theorem}
\label{thm:integral-formulae-of-CR-Yamabe-constant}
	Let $(X, H, J)$ be a compact strongly pseudoconvex CR manifold of dimension $2 n + 1$.
	If $\lambda(X) \geq 0$,
	then for any $r \in \clcl{1}{\infty}$
	\begin{equation}
		\lambda(X)
		\leq \inf_{\wtxth} \norm{R_{\wtxth}}_{L^{r}} \Vol_{\wtxth}(X)^{\frac{1}{n + 1} - \frac{1}{r}},
	\end{equation}
	where $\Vol_{\theta}(X)$ is the volume of $X$ with respect to $d \mu_{\theta}$.
	If the CR Yamabe problem is solvable on $X$, then the equality holds.
	If $\lambda(X) \leq 0$,
	then for any $r \in \clcl{n + 1}{\infty}$
	\begin{align}
		\lambda(X)
		&= - \inf_{\wtxth} \norm{R_{\wtxth}}_{L^{r}}
			\Vol_{\wtxth}(X)^{\frac{1}{n + 1} - \frac{1}{r}} \label{eq:take-1}\\
		&= - \inf_{\wtxth} \norm{R_{\wtxth}^{-}}_{L^{r}}
			\Vol_{\wtxth}(X)^{\frac{1}{n + 1} - \frac{1}{r}}, \label{eq:take-2}
	\end{align}
	where $R_{\wtxth}^{-} \coloneqq \min(R_{\wtxth}, 0)$,
	and the two infima are realized only by a CR Yamabe minimizer.
\end{theorem}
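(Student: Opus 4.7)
The plan is to treat the two sign regimes of $\lambda(X)$ separately, with H\"older's inequality as the common tool and the Yamabe equation relating an arbitrary $\wtxth$ to a minimizer bridging the negative case.

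For $\lambda(X) \geq 0$, I would start from $\lambda(X) \leq \mathfrak{F}(\wtxth)$ and apply H\"older to the numerator:
\[
	\int_{X} R_{\wtxth} \, d \mu_{\wtxth}
	\leq \int_{X} \abs{R_{\wtxth}} \, d \mu_{\wtxth}
	\leq \norm{R_{\wtxth}}_{L^{r}} \Vol_{\wtxth}(X)^{1 - 1/r}.
\]
Dividing by $\Vol_{\wtxth}(X)^{n / (n + 1)}$ and taking the infimum gives the desired upper bound for every $r \in \clcl{1}{\infty}$. If the CR Yamabe problem is solvable on $X$, plugging a minimizer $\theta^{\ast}$ in (on which $R_{\theta^{\ast}}$ is a non-negative constant) makes both sides coincide with $\lambda(X)$ by direct calculation, giving equality.

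The substantive case is $\lambda(X) \leq 0$, in which the Yamabe problem is solvable. Write a minimizer as $\theta^{\ast} = v^{2/n} \wtxth$ with $v > 0$, so that $R_{\theta^{\ast}}$ is a non-positive constant satisfying $R_{\theta^{\ast}} \Vol_{\theta^{\ast}}(X)^{1/(n + 1)} = \lambda(X)$. Testing the Yamabe equation against $v$ and integrating produces
\[
	(2 + 2/n) \int_{X} \abs{d v}_{\wtxth}^{2} \, d \mu_{\wtxth}
	+ \int_{X} R_{\wtxth} v^{2} \, d \mu_{\wtxth}
	= R_{\theta^{\ast}} \Vol_{\theta^{\ast}}(X).
\]
Dropping the non-negative gradient term and using $R_{\wtxth}^{-} \leq R_{\wtxth}$ yields
\[
	\int_{X} \abs{R_{\wtxth}^{-}} v^{2} \, d \mu_{\wtxth} \geq - R_{\theta^{\ast}} \Vol_{\theta^{\ast}}(X).
\]
H\"older with conjugate exponents $r$ and $s = r/(r - 1)$, followed by the embedding $L^{(n + 1)/n} \hookrightarrow L^{s}$ (where the assumption $r \geq n + 1$ enters), relates $\norm{v^{2}}_{L^{s}}$ to $\norm{v^{2}}_{L^{(n + 1)/n}} = \Vol_{\theta^{\ast}}(X)^{n/(n + 1)}$ with an extra factor $\Vol_{\wtxth}(X)^{1/(n + 1) - 1/r}$. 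After cancelling $\Vol_{\theta^{\ast}}(X)^{n/(n+1)}$ this produces
\[
	-\lambda(X) \leq \norm{R_{\wtxth}^{-}}_{L^{r}} \Vol_{\wtxth}(X)^{1/(n + 1) - 1/r}.
\]
Substituting $\wtxth = \theta^{\ast}$ gives equality since $R_{\theta^{\ast}}^{-} = R_{\theta^{\ast}}$ is constant, proving \eqref{eq:take-2}. The identity \eqref{eq:take-1} then follows from $\abs{R} \geq \abs{R^{-}}$ pointwise together with coincidence of the two norms at $\theta^{\ast}$.

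The main technical care is in the H\"older interpolation when $r > n + 1$, so as to reproduce exactly the volume exponent $1/(n + 1) - 1/r$; the $r = \infty$ endpoint is handled with $L^{\infty}$ norms throughout. To identify extremizers, I would retrace the equalities: $\int_{X} \abs{d v}_{\wtxth}^{2} \, d \mu_{\wtxth} = 0$ forces $d v|_{H} \equiv 0$, and since the contact distribution is bracket-generating, $v$ must be constant, whence $\wtxth$ is a positive multiple of $\theta^{\ast}$ and is itself a CR Yamabe minimizer.
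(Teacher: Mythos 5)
Your argument is correct and is essentially the paper's own proof: both rest on H\"older's inequality with exponents tuned so that $r \geq n+1$ yields the volume exponent $\tfrac{1}{n+1}-\tfrac{1}{r}$, combined with the conformal transformation law of $R_{\wtxth}$ relative to a CR Yamabe minimizer and an integration by parts that makes the sublaplacian term contribute with a favorable sign (the Besson--Courtois--Gallot technique). The only cosmetic difference is that you write the minimizer as $v^{2/n}\wtxth$ and test the Yamabe equation against $v$, whereas the paper writes $\wtxth = u^{2/n}\theta$ with $\theta$ the minimizer and estimates $\int_X(-R_{\wtxth})u^{2/n}\,d\mu_\theta$ directly; the computations are equivalent, and your equality analysis via $dv|_H\equiv 0$ and bracket generation reaches the same conclusion as the paper's.
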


\begin{proof}
	When $\lambda(X) \geq 0$,
	the \Holder inequality implies
	\begin{equation}
		\lambda(X)
		\leq \frac{\int_{X} R_{\wtxth} \, d \mu_{\wtxth}}{\Vol_{\wtxth}(X)^{n / (n + 1)}}
		\leq \norm{R_{\wtxth}}_{L^{r}} \Vol_{\wtxth}(X)^{\frac{1}{n + 1} - \frac{1}{r}},
	\end{equation}
	and the equality holds if $\wtxth$ is a CR Yamabe minimizer.

	Now in the case of $\lambda(X) \leq 0$,
	we use the technique of Besson, Courtois, and Gallot~\cite{Besson-Courtois-Gallot1991}.
	Let $\theta$ be a CR Yamabe minimizer,
	which is unique up to a constant in this case.
	Consider another contact form $\wtxth = u^{2 / n} \theta$,
	where $u$ is a positive smooth function.
	The \Holder inequality yields
	\begin{align}
		\norm{R_{\wtxth}}_{L^{r}} \Vol_{\wtxth}(X)^{\frac{1}{n + 1} - \frac{1}{r}}
		&= \rbra*{\int_{X} \abs{R_{\wtxth}}^{r} \, d \mu_{\wtxth}}^{\frac{1}{r}}
			\Vol_{\wtxth}(X)^{\frac{1}{n + 1} - \frac{1}{r}} \\
		&\geq \rbra*{\int_{X} \abs{R_{\wtxth}^{-}}^{r} u^{2 + 2 / n} \, d \mu_{\theta}}^{\frac{1}{r}}
			\rbra*{\int_{X} u^{2 + 2 / n} \, d \mu_{\theta}}^{\frac{1}{n + 1} - \frac{1}{r}} \\
		&\geq \rbra*{\int_{X} (- R_{\wtxth}^{-}) u^{2 / n} \, d \mu_{\theta}} \Vol_{\theta}(X)^{- \frac{n}{n + 1}} \\
		&\geq \rbra*{\int_{X} (- R_{\wtxth}) u^{2 / n} \, d \mu_{\theta}} \Vol_{\theta}(X)^{- \frac{n}{n + 1}}.
	\end{align}
	Here recall that
	\begin{equation}
		R_{\wtxth}
		= u^{- 1 - 2 / n} (R_{\theta} + (2 + 2 / n) \Delta_{b}) u,
	\end{equation}
	where $\Delta_{b}$ is the sublaplacian~\cite{Jerison-Lee1987}.
	Therefore
	\begin{align}
		&\norm{R_{\wtxth}}_{L^{r}} \Vol_{\wtxth}(X)^{\frac{1}{n + 1} - \frac{1}{r}} \\
		&\geq \rbra*{\int_{X} (- R_{\theta} - (2 + 2 / n) u^{- 1} \Delta_{b} u) \, d \mu_{\theta}}
			\Vol_{\theta}(X)^{- \frac{n}{n + 1}} \\
		&= \rbra*{\int_{X} (- R_{\theta} + (2 + 2 / n) u^{- 2} \abs{d u}_{\theta}^{2}) \, d \mu_{\theta}}
			\Vol_{\theta}(X)^{- \frac{n}{n + 1}} \\
		&\geq - \rbra*{\int_{X} R_{\theta} \, d \mu_{\theta}} \Vol_{\theta}(X)^{- \frac{n}{n + 1}} \\
		&= - \lambda(X).
	\end{align}
	This proves
	\begin{equation}
		\lambda(X)
		\geq - \inf_{\wtxth} \norm{R_{\wtxth}^{-}}_{L^{r}}
			\Vol_{\wtxth}(X)^{\frac{1}{n + 1} - \frac{1}{r}}
		\geq - \inf_{\wtxth} \norm{R_{\wtxth}}_{L^{r}}
			\Vol_{\wtxth}(X)^{\frac{1}{n + 1} - \frac{1}{r}}.
	\end{equation}
	On the other hand,
	\begin{equation}
		\lambda(X)
		= - \norm{R_{\theta}^{-}}_{L^{r}}
			\Vol_{\theta}(X)^{\frac{1}{n + 1} - \frac{1}{r}}
		= - \norm{R_{\theta}}_{L^{r}}
			\Vol_{\theta}(X)^{\frac{1}{n + 1} - \frac{1}{r}}
	\end{equation}
	since $R_{\theta}$ is non-positive constant,
	which proves the two desired formulae together.
	It remains to decide when the infima are realized.
	The infimum of \eqref{eq:take-1} or \eqref{eq:take-2} is realized by $\wtxth$
	if and only if the above all inequalities are attained by equalities,
	which holds if and only if $R_{\wtxth}^{-} \equiv R_{\wtxth}$ is a non-positive constant
	(and hence $u$ is a positive constant);
	i.e., $\wtxth$ is a CR Yamabe minimizer.
\end{proof}

\section{Continuity of the CR Yamabe constant}
\label{section:continuity-of-the-CR-Yamabe-constant}

In this section,
we prove the continuity of the CR Yamabe constant
under suitable deformations of CR structures.
Remark that \cref{lem:continuity-of-CRYc} below is a generalization of \cite{Dietrich2021}*{Lemma 5.5}.

\begin{proposition}
\label{prop:continuity-of-CRYc}
	Let $(X, H, J, \theta)$ be a compact pseudohermitian manifold of dimension $2 n + 1$.
	Assume that $(X, H_{i}, J_{i}, \theta_{i})_{i \in \bbN}$
	is a sequence of pseudohermitian structures on $X$
	such that $\theta_{i} \to \theta$ in $C^{2}$-topology,
	and $J_{i} \to J$ and $R_{\theta_{i}} \to R_{\theta}$ in $C^{0}$-topology,
	where $J_{i}$ and $J$ extend to endomorphisms on $T X$ in an obvious way.
	Then one has $\lambda(X, H_{i}, J_{i}) \to \lambda(X, H, J)$.
\end{proposition}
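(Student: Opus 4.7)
The overall plan is to establish $\limsup_{i} \lambda(X, H_i, J_i) \leq \lambda(X, H, J) \leq \liminf_{i} \lambda(X, H_i, J_i)$ by comparing the variational Yamabe functionals in the form \eqref{eq:Yamabe-functional}.

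For the upper bound, I would fix $\varepsilon > 0$ and pick a positive smooth $u$ with $\mathfrak{F}(u^{2/n} \theta) < \lambda(X, H, J) + \varepsilon$. Since $u$ is fixed and smooth on the compact $X$, all of its derivatives are uniformly bounded; the hypotheses $\theta_{i} \to \theta$ in $C^{2}$, $J_{i} \to J$ in $C^{0}$, and $R_{\theta_{i}} \to R_{\theta}$ in $C^{0}$ then imply uniform convergence of the horizontal gradient $|du|_{\theta_i}^{2} \to |du|_{\theta}^{2}$, of the volume form $d\mu_{\theta_i} \to d\mu_\theta$, and of the scalar curvature term, whence $\mathfrak{F}_{i}(u^{2/n} \theta_{i}) \to \mathfrak{F}(u^{2/n}\theta)$. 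Therefore $\limsup_{i} \lambda(X, H_{i}, J_{i}) \leq \lambda(X, H, J) + \varepsilon$, and letting $\varepsilon \to 0$ yields the upper bound.

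The lower bound is more delicate because near-minimizers $u_{i}$ of $\lambda(X, H_{i}, J_{i})$ depend on $i$ and a priori carry no uniform regularity. To handle this I would reduce to the case in which every $H_{i}$ coincides with the fixed $H$: by Gray stability applied to the $C^{2}$-close contact distributions, for $i$ large there is a diffeomorphism $\phi_{i}$ of $X$, close to the identity, with $\phi_{i}^{*} H_{i} = H$; the pullback operation preserves the CR Yamabe constant, so one replaces $(H_{i}, J_{i}, \theta_{i})$ by $(H, \phi_{i}^{*} J_{i}, \phi_{i}^{*} \theta_{i})$. In this reduced setting the Levi forms $d\theta_{i}(\cdot, J_{i}\cdot)$ on the common distribution $H$ converge uniformly to $L_{\theta}$, so the horizontal gradients satisfy the clean comparison
\[
(1 - \eta_{i})\,|du|_{\theta}^{2} \;\leq\; |du|_{\theta_{i}}^{2} \;\leq\; (1 + \eta_{i})\,|du|_{\theta}^{2}
\]
pointwise for all smooth $u$, with $\eta_{i} \to 0$, and similarly $d\mu_{\theta_{i}} / d\mu_{\theta} \to 1$ in $C^{0}$. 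Now choose $u_{i}$ with $\int_{X} u_{i}^{2+2/n}\, d\mu_{\theta_{i}} = 1$ and $\mathfrak{F}_{i}(u_{i}^{2/n}\theta_{i}) < \lambda(X, H_{i}, J_{i}) + 1/i$. The uniform upper bound $\lambda(X, H_{i}, J_{i}) \leq \lambda(S^{2n+1})$, together with the uniform $C^{0}$-control on $R_{\theta_{i}}$ and a Hölder estimate on $\int u_{i}^{2}\, d\mu_{\theta_i}$, forces $\int_{X} |du_{i}|_{\theta_{i}}^{2}\, d\mu_{\theta_{i}} \leq C$ uniformly in $i$. Transferring all the integrals via the comparisons above yields $\mathfrak{F}(u_{i}^{2/n}\theta) \leq \lambda(X, H_{i}, J_{i}) + o(1)$, and so $\lambda(X, H, J) \leq \liminf_{i} \lambda(X, H_{i}, J_{i})$.

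The main obstacle is the Gray-stability reduction and its regularity bookkeeping: starting from only $C^{2}$-convergence of $\theta_{i}$, one has to produce diffeomorphisms $\phi_{i}$ whose pullback action on $\theta_{i}$, $J_{i}$, and $R_{\theta_{i}}$ still yields sufficiently strong convergence that the Levi-form, volume-form, and scalar-curvature comparisons above go through. This reduction is essential because, if the $H_{i}$ are allowed to differ from $H$, the pointwise comparison of $|du|_{\theta_{i}}^{2}$ with $|du|_{\theta}^{2}$ acquires a correction proportional to $(Tu)^{2}$, and the Reeb derivative $Tu_{i}$ of near-minimizers is not controlled by the Yamabe functional; without the reduction the lower bound appears out of reach.
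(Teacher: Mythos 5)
Your proposal is correct and follows essentially the same route as the paper: the decisive step in both is the Gray-stability reduction to a fixed contact distribution $H$, using the $C^{2}$-convergence of $\theta_{i}$ to get $C^{1}$-convergence of the conjugating diffeomorphisms so that the pulled-back Levi forms, volume forms, and scalar curvatures still converge in $C^{0}$. The only minor difference is the endgame: the paper proves a single symmetric estimate $\mathfrak{F}(u^{2/n}\theta_{i}) \leq (1+\varepsilon)^{2+n/(n+1)}\mathfrak{F}(u^{2/n}\theta) + C\varepsilon$ valid for every test function $u$ (splitting $R_{\theta}$ into positive and negative parts and using H\"older), which yields both inequalities at once, whereas you treat the two directions separately via fixed test functions and near-minimizers with a uniform horizontal-gradient bound.
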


\begin{proof}
	Since $\theta_{i} \to \theta$ in $C^{2}$-topology,
	we may assume that $\theta_{i}^{t} \coloneqq t \theta_{i} + (1 - t) \theta$ for $t \in \clcl{0}{1}$
	is a smooth family of contact forms on $X$.
	Let $T_{i}^{t}$ be the Reeb vector field of $\theta_{i}^{t}$,
	which is determined by
	\begin{equation}
		\theta_{i}^{t}(T_{i}^{t}) = 1,
		\qquad
		\iota(T_{i}^{t}) d \theta = 0.
	\end{equation}
	Since $\theta_{i} \to \theta$ in $C^{2}$-topology,
	$\sup_{t \in \clcl{0}{1}} \norm{T_{i}^{t} - T}_{C^{1}} \to 0$.
	Take the time-dependent vector field $V_{i}^{t} \in \Gamma(\Ker \theta_{i}^{t})$ satisfying
	\begin{equation}
		\iota(V_{i}^{t}) d \theta_{i}^{t}
		= (\theta_{i} - \theta)(T_{i}^{t}) \theta_{i}^{t} - (\theta_{i} - \theta).
	\end{equation}
	It follows from $\sup_{t \in \clcl{0}{1}} \norm{\theta_{i}^{t} - \theta}_{C^{2}} \to 0$
	and $\sup_{t \in \clcl{0}{1}} \norm{T_{i}^{t} - T}_{C^{1}} \to 0$ that
	\begin{equation}
	\label{eq:norm-of-tdvf}
		\sup_{t \in \clcl{0}{1}} \norm{V_{i}^{t}}_{C^{1}} \to 0.
	\end{equation}
	The isotopy $\psi_{i}^{t} \colon X \to X$ generated by $V_{i}^{t}$ satisfies
	$(\psi_{i}^{t})^{\ast} H_{i}^{t} = H$ for any $t \in \clcl{0}{1}$;
	see the proof of the Gray stability theorem~\cite{Geiges2008}*{Theorem 2.2.2}.
	The equation \eqref{eq:norm-of-tdvf} yields that
	$\psi_{i}^{1} \to \id_{X}$ in $C^{1}$-topology;
	see e.g.\ \cite{Zhang2022} for a modern treatment of time-dependent vector fields with parameters
	and a proof of this fact.
	In particular,
	\begin{gather}
		\wtxth_{i} \coloneqq (\psi_{i}^{1})^{\ast} \theta_{i} \to \theta,
		\qquad
		d \wtxth_{i} = (\psi_{i}^{1})^{\ast} d \theta_{i} \to d \theta, \\
		\wtJ_{i} \coloneqq (\psi_{i}^{1})^{\ast} J_{i} \to J,
		\qquad
		R_{\wtxth_{i}} = (\psi_{i}^{1})^{\ast} R_{\theta_{i}} \to R_{\theta}
	\end{gather}
	in $C^{0}$-topology.
	Since $\lambda(X, H_{i}, J_{i}) = \lambda(X, H, \wtJ_{i})$,
	the statement follows from the Lemma below.
\end{proof}

\begin{lemma}
\label{lem:continuity-of-CRYc}
	Let $(X, H, J, \theta)$ be a compact pseudohermitian manifold of dimension $2 n + 1$.
	Assume that $(X, H, J_{i}, \theta_{i})_{i \in \bbN}$
	is a sequence of pseudohermitian structures on $X$
	such that $\theta_{i} \to \theta$, $d \theta_{i} \to d \theta$,
	$J_{i} \to J$, and $R_{\theta_{i}} \to R_{\theta}$ in $C^{0}$-topology.
	Then one has $\lambda(X, H, J_{i}) \to \lambda(X, H, J)$.
\end{lemma}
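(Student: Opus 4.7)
The plan is to establish the continuity via a $\Gamma$-convergence style argument on the CR Yamabe functional. Writing $\mathfrak{F}_i$ for the functional associated to $(H, J_i, \theta_i)$ and $\mathfrak{F}_{\infty}$ for the one associated to $(H, J, \theta)$, via the substitution $\wtxth = u^{2/n} \theta_i$ (which preserves $H$ since $u > 0$), both reduce to the expression in \eqref{eq:Yamabe-functional} on $C^{\infty}(X, \bbR_{+})$. I would prove separately that $\limsup \lambda(X, H, J_i) \leq \lambda(X, H, J)$ and $\liminf \lambda(X, H, J_i) \geq \lambda(X, H, J)$. The key technical input is that the defining data of $\mathfrak{F}_i$ converge in $C^{0}$ to those of $\mathfrak{F}_{\infty}$: since $\theta_i \to \theta$ and $d \theta_i \to d \theta$ uniformly, the volume forms $d \mu_{\theta_i} = \theta_i \wedge (d \theta_i)^{n}$ converge uniformly to $d \mu_{\theta}$; since $d \theta_i \to d \theta$ and $J_i \to J$ uniformly and $L_{\theta}$ is uniformly positive definite on the compact manifold $X$, the Levi forms $L_{\theta_i}$ and their duals $L_{\theta_i}^{\ast}$ on $H$ and $H^{\ast}$ converge uniformly to $L_{\theta}$ and $L_{\theta}^{\ast}$; and $R_{\theta_i} \to R_{\theta}$ is given. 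These convergences can all be packaged into pointwise comparisons of the form $(1 - \epsilon_i) \le \cdot \le (1 + \epsilon_i)$ times the limiting quantity, with $\epsilon_i \to 0$.

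For the upper bound, fix a smooth $u > 0$. The key observation is that $d u|_{H}$ is independent of the CR structure because $H$ is fixed, so the integrands in the numerator and denominator of $\mathfrak{F}_i(u^{2 / n} \theta_i)$ differ from those of $\mathfrak{F}_{\infty}(u^{2/n} \theta)$ only through $L_{\theta_i}^{\ast}$, $R_{\theta_i}$, and $d \mu_{\theta_i}$. The uniform convergences above imply $\mathfrak{F}_i(u^{2/n}\theta_i) \to \mathfrak{F}_{\infty}(u^{2/n}\theta)$. Taking the infimum over $u$ then gives $\limsup_i \lambda(X, H, J_i) \leq \mathfrak{F}_{\infty}(u^{2/n}\theta)$, and then $\limsup_i \lambda(X, H, J_i) \leq \lambda(X, H, J)$.

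For the lower bound, choose near-minimizers $u_i$ for $\mathfrak{F}_i$ normalized by $\int_{X} u_i^{2 + 2/n}\, d \mu_{\theta_i} = 1$ with $\mathfrak{F}_i(u_i^{2/n} \theta_i) \leq \lambda(X, H, J_i) + 1/i$. The plan is to show that the same functions witness an almost-matching energy for $\mathfrak{F}_{\infty}$, so that $\lambda(X, H, J) \le \mathfrak{F}_{\infty}(u_i^{2/n} \theta) \leq \lambda(X, H, J_i) + 1/i + o(1)$. This requires uniform bounds on $\|u_i\|_{L^{2+2/n}}$ (trivial from normalization), $\|u_i\|_{L^{2}}$ (from H\"older since $\Vol_{\theta_i}(X)$ is bounded), and $\|d u_i|_{H}\|_{L^{2}(\theta_i)}$. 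The third bound reduces to showing that $\lambda(X, H, J_i)$ is uniformly bounded below, which follows from $\mathfrak{F}_i(u) \geq -\|R_{\theta_i}\|_{\infty} \cdot \Vol_{\theta_i}(X)^{1/(n+1)}$ together with the uniform $C^{0}$-bounds on $R_{\theta_i}$ and volumes. Given these uniform bounds, the pointwise comparisons of $L_{\theta_i}^{\ast}$, $R_{\theta_i}$, and $d\mu_{\theta_i}$ against the limits yield
\begin{equation}
\left| \int_{X} |d u_i|_{\theta_i}^{2} \, d\mu_{\theta_i} - \int_{X} |d u_i|_{\theta}^{2} \, d\mu_{\theta} \right|
\leq \epsilon_i \int_{X} |d u_i|_{\theta}^{2} \, d\mu_{\theta} = o(1),
\end{equation}
and similar estimates for the potential and denominator terms, so $\mathfrak{F}_{\infty}(u_i^{2/n}\theta) = \mathfrak{F}_i(u_i^{2/n}\theta_i) + o(1)$.

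The main obstacle is thus the uniform control of the near-minimizers $u_i$, which is needed because the $C^{0}$-convergence of the structural data yields only \emph{relative} errors; absolute errors in the functional values are controlled only if the relevant Sobolev norms of $u_i$ stay bounded. Once the a priori lower bound on $\lambda(X, H, J_i)$ and the uniform $L^{2+2/n}$-normalization give uniform $L^{2}$- and $H^{1}_{b}$-type bounds on $u_i$, the rest is a matter of carefully chaining the pointwise estimates, and the two inequalities combine to yield $\lambda(X, H, J_i) \to \lambda(X, H, J)$.
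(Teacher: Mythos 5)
Your argument is correct, and its analytic core coincides with the paper's: both proofs rest on the observation that the $C^{0}$-convergences $\theta_i \to \theta$, $d\theta_i \to d\theta$, $J_i \to J$ give two-sided multiplicative comparisons $(1+\varepsilon)^{-1}L_{\theta} < L_{\theta_i} < (1+\varepsilon)L_{\theta}$ and $(1+\varepsilon)^{-1}d\mu_{\theta} < d\mu_{\theta_i} < (1+\varepsilon)d\mu_{\theta}$, while the zeroth-order error coming from $R_{\theta_i}-R_{\theta}$ is tamed by the H\"older inequality $\int_X u^2\,d\mu_{\theta} \le (\int_X u^{2+2/n}\,d\mu_{\theta})^{n/(n+1)}\Vol_{\theta}(X)^{1/(n+1)}$. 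The difference is organizational. The paper packages these comparisons into a single estimate that is \emph{uniform in the test function}, namely $\mathfrak{F}(u^{2/n}\theta_i)\le(1+\varepsilon)^{2+n/(n+1)}\mathfrak{F}(u^{2/n}\theta)+C\varepsilon$ with $C$ independent of $u$ (splitting $R_{\theta}$ into $R_{\theta}^{\pm}$ so that each signed term receives a favorable power of $1+\varepsilon$); taking the infimum over $u$ and invoking the symmetry between $(J,\theta)$ and $(J_i,\theta_i)$ then yields both inequalities at once, with no minimizing sequences. You obtain the $\limsup$ inequality in essentially that way, but handle the $\liminf$ by a near-minimizer argument, which obliges you to prove a priori bounds on the $u_i$. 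That route also closes, but note one slip: the uniform gradient bound $\int_X|du_i|_{\theta_i}^2\,d\mu_{\theta_i}\le C$ follows from $\mathfrak{F}_i(u_i^{2/n}\theta_i)\le\lambda(X,H,J_i)+1/i$ together with an \emph{upper} bound on $\lambda(X,H,J_i)$ (e.g.\ $\lambda(X,H,J_i)\le\mathfrak{F}(\theta_i)\le K\Vol_{\theta_i}(X)^{1/(n+1)}$, or $\lambda(X,H,J_i)\le 2n(n+1)\pi$), not from the lower bound you cite; the lower bound only guarantees that the infimum is finite. With that correction your compactness step is sound, though the paper's uniform-in-$u$ comparison renders it unnecessary.
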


\begin{proof}
	Without loss of generality,
	we may assume that $\Vol_{\theta_{i}}(X) = \Vol_{\theta}(X) = 1$.
	Since $R_{\theta_{i}} \to R_{\theta}$ in $C^{0}$-topology,
	we can find $K > 0$ such that $\abs{R_{\theta}} < K$ and $\abs{R_{\theta_{i}}} < K$ for any $i$.
	For each $\varepsilon \in \opop{0}{1}$,
	take $N(\varepsilon) \in \bbZ_{+}$ such that
	$i \geq N(\varepsilon)$ implies
	\begin{gather}
		(1 + \varepsilon)^{- 1} L_{\theta} < L_{\theta_{i}} < (1 + \varepsilon) L_{\theta}, \\
		(1 + \varepsilon)^{- 1} d \mu_{\theta} < d \mu_{\theta_{i}} < (1 + \varepsilon) d \mu_{\theta}, \\
		\abs{R_{\theta_{i}} - R_{\theta}} < \varepsilon,
	\end{gather}
	For any $f \in C^{\infty}(X, \bbR)$,
	we write $f^{+} \coloneqq \max(f, 0)$ and $f^{-} \coloneqq \min(f, 0)$.
	Let $u \in C^{\infty}(X, \bbR_{+})$.
	The \Holder inequality yields
	\begin{equation}
		\int_{X} u^{2} \, d \mu_{\theta}
		\leq \rbra*{\int_{X} u^{2 + 2 / n} \, d \mu_{\theta}}^{\frac{n}{n + 1}} \Vol_{\theta}(X)^{\frac{1}{n + 1}}
		= \rbra*{\int_{X} u^{2 + 2 / n} \, d \mu_{\theta}}^{\frac{n}{n + 1}}
	\end{equation}
	and
	\begin{equation}
		\abs*{\int_{X} R_{\theta}^{\pm} u^{2} \, d \mu_{\theta}}
		\leq K \int_{X} u^{2} \, d \mu_{\theta}
		\leq K \rbra*{\int_{X} u^{2 + 2 / n} \, d \mu_{\theta}}^{\frac{n}{n + 1}}.
	\end{equation}
	It follows from the above inequalities that
	\begin{align}
		\int_{X} R_{\theta_{i}} u^{2} \, d \mu_{\theta_{i}}
		&\leq \int_{X} (R_{\theta} + \varepsilon) u^{2} \, d \mu_{\theta_{i}} \\
		&\leq \int_{X} (R_{\theta}^{+} + \varepsilon) u^{2} \, d \mu_{\theta_{i}}
			+ \int_{X} R_{\theta}^{-} u^{2} \, d \mu_{\theta_{i}} \\
		&\leq (1 + \varepsilon) \int_{X} (R_{\theta}^{+} + \varepsilon) u^{2} \, d \mu_{\theta}
			+ (1 + \varepsilon)^{- 1} \int_{X} R_{\theta}^{-} u^{2} \, d \mu_{\theta},
	\end{align}
	and
	\begin{equation}
		(1 + \varepsilon)^{- 1} \int_{X} u^{2 + 2 / n} d \mu_{\theta}
		\leq \int_{X} u^{2 + 2 / n} d \mu_{\theta_{i}}
		\leq (1 + \varepsilon) \int_{X} u^{2 + 2 / n} d \mu_{\theta}.
	\end{equation}
	Thus we have
	\begin{align}
		&\mathfrak{F}(u^{2 / n} \theta_{i}) \\
		&= \sbra*{\int_{X} ((2 + 2 / n) \abs{d u}_{\theta_{i}}^{2} + R_{\theta_{i}} u^{2}) \, d \mu_{\theta_{i}}}
			\rbra*{\int_{X} u^{2 + 2 / n} \, d \mu_{\theta_{i}}}^{- \frac{n}{n + 1}} \\
		&\leq \bigg[(1 + \varepsilon)^{2 + n / (n + 1)} \int_{X} (2 + 2 / n) \abs{d u}_{\theta}^{2} \, d \mu_{\theta}
			+ (1 + \varepsilon)^{1 + n / (n + 1)} \int_{X} (R_{\theta}^{+} + \varepsilon) u^{2} \, d \mu_{\theta} \\
		&\quad + (1 + \varepsilon)^{- 1 - n / (n + 1)} \int_{X} R_{\theta}^{-} u^{2} \, d \mu_{\theta} \bigg]
			\rbra*{\int_{X} u^{2 + 2 / n} \, d \mu_{\theta}}^{- \frac{n}{n + 1}} \\
		&\leq (1 + \varepsilon)^{2 + n / (n + 1)} \mathfrak{F}(u^{2 / n} \theta) + C \varepsilon,
	\end{align}
	where $C$ is a positive constant independent of $u$ and $\varepsilon$.
	Taking the infimum yields
	\begin{equation}
		\lambda(X, H, J_{i})
		\leq (1 + \varepsilon)^{2 + n / (n + 1)} \lambda(X, H, J) + C \varepsilon.
	\end{equation}
	Since $(J, \theta)$ and $(J_{i}, \theta_{i})$ are symmetric,
	we also obtain
	\begin{equation}
		\lambda(X, H, J)
		\leq (1 + \varepsilon)^{2 + n / (n + 1)} \lambda(X, H, J_{i}) + C \varepsilon.
	\end{equation}
	Since $\varepsilon > 0$ is arbitrary,
	we have
	\begin{equation}
		\limsup_{i \to \infty} \lambda(X, H, J_{i})
		\leq \lambda(X, H, J)
		\leq \liminf_{i \to \infty} \lambda(X, H, J_{i}),
	\end{equation}
	which implies $\lambda(X, H, J_{i}) \to \lambda(X, H, J)$.
\end{proof}

\section{Deformations of CR structures with varying CR Yamabe constants}
\label{section:deformations-of-CR-structures}

In this section,
we construct a family of strongly pseudoconvex CR structures
with varying CR Yamabe constants.
Let $(M, J, \omega)$ be an $n$-dimensional compact Hodge manifold with constant scalar curvature.
Let us write
\begin{equation}
	\calK
	\coloneqq \Set{\varphi \in C^{\infty}(M) | \omega_{\varphi} \coloneqq \omega + i \del \delb \varphi > 0}
\end{equation}
endowed with the $C^{4}$-topology
for the space of \Kahler potentials in the class $[\omega]$.
For any $\varphi \in \calK$,
we have
\begin{equation}
	\int_{M} \, \omega_{\varphi}^{n}
	= [\omega]^{n},
	\qquad
	\int_{M} S(\omega_{\varphi}) \, \omega_{\varphi}^{n}
	= 2 n \pi c_{1}(M) \cup [\omega]^{n - 1},
\end{equation}
which are independent of the choice of $\varphi$.
In particular if $\omega_{\varphi}$ is a constant scalar curvature \Kahler metric,
then
\begin{equation}
	S(\omega_{\varphi})
	= \frac{2 n \pi c_{1}(M) \cup [\omega]^{n - 1}}{[\omega]^{n}}
	\eqqcolon \whS.
\end{equation}
Set
\begin{equation}
	\calF
	\coloneqq \Set{\varphi \in \calK | S(\omega_{\varphi}) = \whS}
\end{equation}
so that $\omega_{\varphi}$ is not a constant scalar curvature \Kahler metric
for any $\varphi \in \calK \setminus \calF$.
It is known that $\omega_{\varphi}$ is a constant scalar curvature \Kahler metric if and only if
there exists $F \in \Aut^{0}(M)$
such that $\omega_{\varphi} = F^{\ast} \omega$~\cite{Berman-Berndtsson2017}*{Theorem 1.3}.
In particular if $\Aut(M)$ is discrete,
or equivalently,
$M$ admits no non-trivial holomorphic vector fields,
then $\calF = \bbR$.
More generally,
if any holomorphic vector field is parallel,
then $\calF = \bbR$.
This is because a parallel and holomorphic vector field preserves the \Kahler form $\omega$.

For each $\varphi \in \calK$,
take a principal connection $\theta_{\varphi}$ on $P_{M}$
satisfying $d \theta_{\varphi} / 2 \pi = p^{\ast} \omega_{\varphi}$,
which is given by
\begin{equation}
	\theta_{\varphi}
	\coloneqq \theta + \pi p^{\ast} d^{c} \varphi,
\end{equation}
where $d^{c} \coloneqq i(\delb - \del)$.
This gives an infinite-dimensional family of pseudohermitian manifolds
\begin{equation}
	X_{\varphi}
	\coloneqq (P_{M}, H_{\varphi} \coloneqq \Ker \theta_{\varphi}, p^{\ast} J, \theta_{\varphi} / 2 \pi)
\end{equation}
underlying the same manifold $P_{M}$.
Integration along fibers yields that
\begin{align}
	\mathfrak{F}(\theta_{\varphi} / 2 \pi)
	= \frac{\int_{P_{M}} p^{\ast} S(\omega_{\varphi})
		\, (\theta_{\varphi} / 2 \pi) \wedge (p^{\ast} \omega_{\varphi})^{n}}
		{(\int_{P_{M}} \, (\theta_{\varphi} / 2 \pi)\wedge (p^{\ast} \omega_{\varphi})^{n})^{n / (n + 1)}}
	&= \frac{\int_{M} S(\omega_{\varphi}) \, \omega_{\varphi}^{n}}
		{(\int_{M} \, \omega_{\varphi}^{n})^{n / (n + 1)}} \\
	&= \frac{2 n \pi c_{1}(M) \cup [\omega]^{n - 1}}{([\omega]^{n})^{n / (n + 1)}},
\end{align}
which is independent of $\varphi$.

\begin{proof}[Proof of \cref{thm:varying-CR-Yamabe-constant}]
	The continuity follows from \cref{prop:continuity-of-CRYc}.
	If $\theta_{0} = \theta$ is a CR Yamabe minimizer,
	then $\lambda(P_{M}, H, p^{\ast} J) = \mathfrak{F}(\theta_{0} / 2 \pi)$.
	On the other hand,
	$p^{\ast} S(\omega_{\varphi})$ is not constant for any $\varphi \in \calU \setminus \calF$,
	and so
	\begin{equation}
		\lambda(P_{M}, H_{\varphi}, p^{\ast} J)
		< \mathfrak{F}(\theta_{\varphi} / 2 \pi)
		= \mathfrak{F}(\theta_{0} / 2 \pi)
		= \lambda(P_{M}, H, p^{\ast} J),
	\end{equation}
	which completes the first assertion.
	If $\omega$ has constant non-positive scalar curvature,
	then so does $\theta / 2 \pi$,
	which must be a CR Yamabe minimizer.
	If $\omega$ is a \Kahler-Einstein metric,
	then $\theta / 2 \pi$ is an Einstein contact form.
	It follows from \cref{lem:Einstein-is-CR-Yamabe-minimizer} that $\theta / 2 \pi$ is a CR Yamabe minimizer.
\end{proof}

In general,
it may be cumbersome to have non-trivial $\calF$,
since it can be singular and not easy to locate.
In fact,
we do not have many examples with non-trivial $\calF$.
We give some examples of Hodge manifolds with $\calF = \bbR$.

\begin{example}[\Kahler-Einstein manifolds with non-positive scalar curvature]
	First,
	a compact complex manifold $M$ with $c_{1}(M) < 0$ admits a \Kahler-Einstein metric
	in the \Kahler class $- c_{1}(M)$ by the Aubin-Yau theorem~\cites{Aubin1976, Yau1978}.
	Second,
	the celebrated Calabi-Yau theorem \cite{Yau1978} implies that
	any \Kahler class on a compact complex manifold $M$ with $c_{1}(M) = 0$ in $H^{2}(M; \bbR)$
	is represented by a unique Ricci-flat \Kahler metric.
	Thus we can take any integral \Kahler class for our purpose.
	In these cases,
	a constant scalar curvature \Kahler metric in any \Kahler class,
	if it exists,
	must be unique~\cite{Chen2000}*{Theorem 7}
	and hence $\calF = \bbR$.
\end{example}

\begin{example}[Fano manifolds]
	Let $M$ be a Fano manifold;
	that is,
	a compact complex manifold with $c_{1}(M) > 0$.
	If $M$ admits a \Kahler-Einstein metric $\omega$ in the \Kahler class $c_{1}(M)$,
	then $\theta / 2 \pi$ gives a CR Yamabe minimizer.
	For example,
	a complex surface given by a blow-up of $\cps^{2}$ at $m$ points
	in general position with $3 \leq m \leq 8$
	admits a \Kahler-Einstein metric of positive scalar curvature~\cites{Tian-Yau1987, Tian1990}.
	Note that the automorphism groups of these surfaces are discrete.
	As a higher-dimensional example,
	consider the Fermat hypersurface $F_{n, d} \subset \cps^{n + 1}$ of degree $3 \leq d \leq n + 1$;
	that is;
	\begin{equation}
		F_{n, d}
		\coloneqq \Set{[z_{0}: \dots: z_{n + 1}] \in \cps^{n + 1} | \sum_{k = 0}^{n + 1} z_{k}^{d} = 0}.
	\end{equation}
	This $F_{n, d}$ admits a \Kahler-Einstein metric~\cite{Tian2000}*{Section 6.3}.
	Note that $F_{n, d}$ has no non-trivial holomorphic vector field~\cite{Kodaira-Spencer1958}*{Lemma 14.2};
	see \cite{Matsumura-Monsky1964} for another proof.
\end{example}

\begin{example}[Scalar-flat but not Ricci-flat surfaces]
	Take a complex surface $S_{m}$ obtained by blowing up $\cps^{1} \times \Sigma$
	at generic $m$ points $p_{1}, \cdots, p_{m}$,
	where $\Sigma$ is a compact Riemann surface of genus $g \geq 2$ and $m \geq 3$.
	It is known that $S_{m}$ admits a scalar-flat \Kahler form $\wtxo$~\cite{LeBrun-Singer1993}*{Theorem 3.11}.
	Note that $S_{m}$ does not admit a Ricci-flat \Kahler metric
	since $c_{1}(S_{m})^{2} = 8 (1 - g) - m < 0$.
	Assume that the projection of $\{p_{1}, \cdots, p_{m}\}$ to $\cps^{1}$ consists of at least $3$ points.
	Since $\Sigma$ has no non-trivial holomorphic vector fields
	and any holomorphic vector field on $\cps^{1}$ vanishing at least $3$ points must be trivial,
	$S_{m}$ admits no non-trivial holomorphic vector fields also.

	It still remains to show that $S_{m}$ has a scalar-flat integral \Kahler class.
	Since $H^{2, 0}(S_{m}) = 0$,
	we have $H^{1, 1}(S_{m}; \bbR) \cong H^{2}(S_{m}; \bbZ) \otimes \bbR$.
	The linear operator
	\begin{equation}
		c_{1}(S_{m}) \cup \colon H^{1, 1}(S_{m}; \bbR) \to \bbR
	\end{equation}
	has integer coefficients
	and satisfies $c_{1}(S_{m}) \cup [\wtxo] = 0$.
	Hence there exists a rational \Kahler class $\mu$ close enough to $[\wtxo]$
	satisfying $c_{1}(S_{m}) \cup \mu = 0$.
	\cite{LeBrun-Simanca1995}*{Theorem 1} implies that
	$\mu$ contains a scalar-flat \Kahler form $\omega$.
	Therefore we obtain a scalar-flat integral \Kahler class by homothety.
\end{example}

\section{Construction of a manifold with different signs of CR Yamabe constants}
\label{section:construction-of-manifold-with-inequivalent-CR-structures}

In this section,
we construct a manifold admitting two strongly pseudoconvex CR structures
with different signs of CR Yamabe constants.
Our construction is based on a exotic smooth structure of a certain complex surface
and an adaptation of the technique originated by Ruan~\cite{Ruan1994} and used by Kim and Sung~\cite{Kim-Sung2016}
to show the existence of inequivalent symplectic structures on certain $6$-manifolds.

Let $B$ be the Barlow surface~\cites{Barlow1985,Kotschick1989}
and $R_{8}$ be a complex surface given by a blow-up of $\cps^{2}$ at $8$ points in general position.
The Barlow surface is a simply-connected minimal surface of general type with $q = p_{g} = 0$ and $c_{1}(B)^2 = 1$,
and contains $(- 2)$-curves so that its canonical line bundle is not ample.
But as shown in \cite{Catanese-LeBrun1997}*{Theorem 7},
it has a small deformation with ample canonical line bundle
and hence admits a \Kahler-Einstein metric of negative scalar curvature by the celebrated Aubin-Yau theorem.
By the results of Tian and Yau \cites{Tian-Yau1987, Tian1990},
$R_{8}$ admits a \Kahler-Einstein metric of positive scalar curvature.
It is well-known that $B$ and $R_{8}$ are homeomorphic by Freedman's classification~\cite{Freedman1982}*{Theorem 1.5}
while they are not diffeomorphic by Kotschick's theorem~\cite{Kotschick1989}*{Theorem 1}.

\begin{remark}
	An easier way of proving Kotschick's theorem by using Seiberg-Witten invariant runs as follows.
	Since $R_{8}$ and $B$ have $b_{2}^{+} = 1$,
	their Seiberg-Witten invariants for a $\mathrm{Spin}^{c}$ structure $\xi$ with $c_{1}(\xi)^{2} > 0$
	are well-defined for any small perturbation.
	The complex surface $R_{8}$ admits a metric of positive scalar curvature,
	so its Seiberg-Witten invariants all vanish.
	However the Seiberg-Witten invariant of $B$
	for the canonical $\mathrm{Spin}^c$ structure determined by the complex structure is $\pm 1$~\cite{Morgan1996}.
\end{remark}

Since the intersection forms of both $B$ and $R_{8}$ are indefinite and odd,
they are isomorphic to $(1) \oplus 8(- 1)$.
Wall~\cite{Wall1962}*{p.336} has proved that
all characteristic vectors with square $1$ in $(1) \oplus 8(- 1)$ are equivalent.
Since the first Chern class of $B$ and $R_{8}$ are characteristic with square $1$ by Wu's formula,
there is an isomorphism from $H^{2}(R_{8}; \bbZ)$ to $H^{2}(B; \bbZ)$
preserving the intersection form and the first Chern class.
This induces an isomorphism
\begin{equation}
	\Psi \colon H^{\ast}(R_{8} \times \cps^{1}; \bbZ) \to H^{\ast}(B \times \cps^{1}; \bbZ)
\end{equation}
in the obvious way preserving $H^{\ast}(\cps^{1}; \bbZ)$.
We claim that $\Psi$ satisfies the conditions of the following theorem.

\begin{theorem}[\cite{Jupp1973}*{Theorem 1}]
	Let $X$ and $Y$ be smooth closed simply-connected $6$-manifolds with torsion-free homology.
	Suppose that there is an isomorphism from $H^{\ast}(X; \bbZ)$ to $H^{\ast}(Y; \bbZ)$
	preserving the triple cup product structure $\mu \colon H^{2} \otimes H^{2} \otimes H^{2} \to \bbZ$,
	the second Stiefel-Whitney class,
	and the first Pontryagin class.
	Then there exists an orientation-preserving diffeomorphism from $X$ to $Y$
	realizing this algebraic isomorphism.
\end{theorem}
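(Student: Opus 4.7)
The plan is to mirror the classical approach to the classification of simply-connected manifolds in low dimensions: decompose $X$ and $Y$ handle-theoretically according to their cohomology, match the pieces using the preserved algebraic data, and glue. Since $X$ and $Y$ are closed, simply-connected, and of dimension $6$, a standard handle-trading argument applied to a self-indexing Morse function produces handle decompositions with no $1$-handles and, by Poincar\'e duality, no $5$-handles. The torsion-freeness of $H_{\ast}$ lets one arrange the $2$-, $3$-, and $4$-handles so that the cellular chain complex realizes integral cohomology exactly, so the $2$-handles correspond to a basis of $H_{2}(X;\bbZ)$ and dually the $4$-handles to a basis of $H_{4}(X;\bbZ)\cong H_{2}(X;\bbZ)^{\ast}$.

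First I would realize a basis of $H_{2}(X;\bbZ)$ by a collection of disjointly embedded $2$-spheres $\Sigma_{1},\dots,\Sigma_{r}$, which is possible because dimension $6$ permits general-position isotopies and because $\pi_{1}(X)=1$ lets one surger away any unwanted intersections. Cup products of classes Poincar\'e-dual to the $\Sigma_{i}$ with a third class in $H^{2}$ detect exactly the triple product $\mu$, so the isomorphism $\Psi$ forces any corresponding spheres $\Sigma_{i}'\subset Y$ to have the same intersection-theoretic behavior with the Poincar\'e duals of $H_{4}(Y)$. Preservation of $w_{2}$ and $p_{1}$ translates, through the standard identification of the normal bundle of an embedded $2$-sphere with an element of $\pi_{1}(SO(4))\oplus\bbZ$ detected by these characteristic classes, into matching of the normal framings; hence I can produce an orientation-preserving diffeomorphism of tubular neighborhoods $\nu(\Sigma_{1}\cup\cdots\cup\Sigma_{r})\subset X$ with $\nu(\Sigma_{1}'\cup\cdots\cup\Sigma_{r}')\subset Y$ compatible with $\Psi$.

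Next I would analyze what remains. A regular neighborhood $N$ of the $2$-skeleton has simply-connected boundary $\partial N$, which is a closed $5$-manifold with torsion-free homology; by the Smale--Barden classification such a manifold is determined up to diffeomorphism by $H_{2}$ together with $w_{2}$, both of which are transported by $\Psi$. This gives a diffeomorphism $\partial N\cong\partial N'$ extending the identification on the boundaries of the tubular neighborhoods of the $\Sigma_{i}$. Dually, $X\setminus\mathrm{int}(N)$ is built from $3$- and $4$-handles corresponding to $H_{3}$ and $H_{4}$; Poincar\'e duality lets one run the same sphere-embedding and framing argument on the dual handle structure, again pinned down by $\mu$, $w_{2}$, and $p_{1}$, producing a compatible diffeomorphism of this complementary piece.

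The final step is to glue the two diffeomorphisms along $\partial N$, obtaining at worst a smooth $h$-cobordism from $X$ to $Y$ whose inclusions induce the algebraic isomorphism $\Psi$; Smale's $h$-cobordism theorem in dimension $6$ then delivers an actual diffeomorphism. The main obstacle throughout is coherence: the algebraic data must suffice both to match individual handle attachments \emph{and} to ensure that the matchings agree on overlaps. Concretely, the hardest point is controlling the mapping-class-group ambiguity in identifying $\partial N$ with $\partial N'$, showing that any residual automorphism of $\partial N$ that obstructs extension into the $3$-/$4$-handle piece must already be visible algebraically and therefore killed by the hypothesis on $\Psi$. This is essentially the heart of Jupp's original argument and requires careful bookkeeping of the actions of $\mathrm{Diff}(\partial N)$ on framings, which is why the theorem is nontrivial despite the apparently generous algebraic input.
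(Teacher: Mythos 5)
This statement is quoted verbatim from Jupp's paper and used as a black box; the paper you are comparing against contains no proof of it, so the only meaningful comparison is with Jupp's original argument. Your outline follows the Wall-style handle-theoretic strategy (match $2$-handles, match the complement, glue), whereas Jupp actually works surgery-theoretically, computing the manifolds within a polarized homotopy type via normal invariants and the surgery exact sequence, together with Wall's splitting theorem $X \cong X_{0} \# k(S^{3} \times S^{3})$ that disposes of $H_{3}$. That difference of route would be fine if your sketch closed, but it has a genuine gap at its decisive step, plus some incorrect bookkeeping along the way.

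On the bookkeeping: the normal bundle of an embedded $2$-sphere in a $6$-manifold is an oriented rank-$4$ bundle over $S^{2}$, classified by $\pi_{1}(SO(4)) \cong \bbZ/2$ and detected by $w_{2}$ alone; there is no $\bbZ$ summand, and $p_{1}$, being a degree-$4$ class, carries no information about these normal bundles. Likewise the triple product $\mu$ is invisible in a configuration of disjoint embedded $2$-spheres (two $2$-spheres in a $6$-manifold are generically disjoint); $\mu$ and $p_{1}$ only enter through the attaching data of the $4$-handles, i.e.\ through how the classes dual to $H^{4}$ are capped off in $\partial N$. So the claim that preservation of $w_{2}$ and $p_{1}$ "matches the normal framings" of the $\Sigma_{i}$ is not correct as stated, and all the real content is deferred to the complementary piece. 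The decisive gap is the final gluing: a diffeomorphism $N \cong N'$ and a diffeomorphism of the complements do not combine into a diffeomorphism, nor into an $h$-cobordism, unless their restrictions to $\partial N$ agree up to an isotopy extending over both sides. You yourself identify controlling the action of $\mathrm{Diff}(\partial N)$ on the attaching data as "the heart of Jupp's original argument," but that is precisely the step your proof does not carry out; as written, the argument shows only that $X$ and $Y$ are assembled from abstractly diffeomorphic pieces, not that the assembly maps agree. Since this is exactly where the hypotheses on $\mu$, $w_{2}$, $p_{1}$ do their work, the proposal is an outline of the known strategy rather than a proof.
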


It is enough to check that $\Psi$ preserves the specified characteristic classes.
By the product formula,
\begin{align}
	w_{2}(R_{8} \times \cps^{1})
	&= w_{2}(R_{8}) + w_{1}(R_{8}) w_{1}(\cps^{1}) + w_{2}(\cps^{1}) \\
	&= c_{1}(R_{8}) + 0 + c_{1}(\cps^{1}) \qquad \textrm{mod} \ 2,
\end{align}
and likewise for $B \times \cps^{1}$.
Since
\begin{equation}
	\Psi(c_{1}(R_{8}) + c_{1}(\cps^{1}))
	= c_{1}(B) + c_{1}(\cps^{1}),
\end{equation}
$\Psi$ preserves the second Stiefel-Whitney class.
Using the fact that $p_{1} = c_{1}^{2} - 2 c_{2}$ and the product formula,
we have
\begin{align}
	p_{1}(R_{8} \times \cps^{1})
	&= c_{1}(R_{8} \times \cps^{1})^{2} - 2 c_{2}(R_{8} \times \cps^{1}) \\
	&= (c_{1}(R_{8}) + c_{1}(\cps^{1}))^{2} - 2 (c_{2}(R_{8}) + c_{1}(R_{8}) c_{1}(\cps^{1})),
\end{align}
and likewise for $B \times \cps^{1}$.
Since $\Psi$ preserves the Euler characteristic;
i.e., the alternating sum of Betti numbers,
$\Psi$ maps $e(R_{8}) = c_{2}(R_{8})$ to $e(B) = c_{2}(B)$.
Therefore $\Psi$ preserves the first Pontryagin class too,
and we have an orientation-preserving diffeomorphism
\begin{equation}
	\psi \colon R_{8} \times \cps^{1} \to B \times \cps^{1}
\end{equation}
satisfying $\psi^{\ast}(c_{1}(B)) = c_{1}(R_{8})$ and $\psi^{\ast}(c_{1}(\cps^{1})) = c_{1}(\cps^{1})$.

Let $n \geq 3$.
Take \Kahler forms $\omega_{1} \in c_{1}(R_{8})$,
$\omega_{2} \in c_{1}(\cps^{1})$,
and $\omega_{3} \in n c_{1}(\cps^{n - 3})$ such that
\begin{equation}
	\Ric(\omega_{1})
	= 2 \pi \omega_{1},
	\qquad
	\Ric(\omega_{2})
	= 2 \pi \omega_{2},
	\qquad
	\Ric(\omega_{3})
	= \frac{2 \pi}{n} \omega_{3}.
\end{equation}
Then
\begin{equation}
	(M \coloneqq R_{8} \times \cps^{1} \times \cps^{n - 3}, \omega \coloneqq \omega_{1} + \omega_{2} + \omega_{3})
\end{equation}
is a \Kahler manifold with positive Ricci curvature.
Let $P_{M}$ be the principal $S^{1}$-bundle over $M$ whose Euler class is $- [\omega]$.
This $P_{M}$ admits a connection one-form $\theta$ and the lifted CR structure $J$
such that $d \theta / 2 \pi$ projects down to $\omega$ on $M$.
The CR Yamabe constant of $(P_{M}, H \coloneqq \Ker \theta, J)$ must be positive
by the argument after formula \eqref{eq:Yamabe-functional}.

\begin{lemma}
	The manifold $P_{M}$ is simply-connected.
\end{lemma}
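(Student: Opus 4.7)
The plan is to apply the long exact sequence of homotopy groups associated with the principal circle bundle $S^{1} \to P_{M} \to M$. First I would verify that the base $M = R_{8} \times \mathbb{CP}^{1} \times \mathbb{CP}^{n - 3}$ is simply-connected: the surface $R_{8}$ is a blow-up of $\mathbb{CP}^{2}$ at finitely many points and hence simply-connected, the factors $\mathbb{CP}^{1}$ and $\mathbb{CP}^{n - 3}$ are simply-connected (with the convention $\mathbb{CP}^{0} = \mathrm{pt}$ when $n = 3$), and simple-connectedness is preserved under finite products. The pertinent portion of the long exact sequence then reads
\begin{equation}
\pi_{2}(M) \xrightarrow{\partial} \pi_{1}(S^{1}) \to \pi_{1}(P_{M}) \to \pi_{1}(M) = 0,
\end{equation}
so it suffices to exhibit surjectivity of the connecting homomorphism $\partial$.

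Next I would identify $\partial$ with evaluation against the Euler class. The bundle $P_{M} \to M$ is classified by a map $f \colon M \to BS^{1} \simeq \mathbb{CP}^{\infty}$ pulling the canonical class back to $- [\omega]$. Comparing long exact sequences via $f$ and applying the Hurewicz isomorphism $\pi_{2}(M) \cong H_{2}(M; \mathbb{Z})$ (valid since $M$ is simply-connected) shows that $\partial$ is the homomorphism $H_{2}(M; \mathbb{Z}) \to \mathbb{Z}$ given by $\alpha \mapsto \langle - [\omega], \alpha \rangle$. Thus I need to produce $\alpha \in H_{2}(M; \mathbb{Z})$ with $\langle [\omega], \alpha \rangle = \pm 1$.

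For this, I would use an exceptional $(- 1)$-curve $E \subset R_{8}$ from the blow-up. By the adjunction formula for a smooth rational curve with self-intersection $- 1$, one has $\langle c_{1}(R_{8}), [E] \rangle = 2 + E \cdot E = 1$. Setting $\alpha \coloneqq [E] \times [\mathrm{pt}] \times [\mathrm{pt}] \in H_{2}(M; \mathbb{Z})$ and recalling $[\omega] = c_{1}(R_{8}) + c_{1}(\mathbb{CP}^{1}) + n \, c_{1}(\mathbb{CP}^{n - 3})$, the Künneth formula kills the last two summands on $\alpha$, yielding $\langle [\omega], \alpha \rangle = 1$. Hence $\partial$ hits a generator, is surjective, and $\pi_{1}(P_{M}) = 0$. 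I anticipate no genuine obstacle here; the only care required is the identification of the transgression $\partial$ on $\pi_{2}$ with the Euler pairing, which is standard once the base is known to be simply-connected.
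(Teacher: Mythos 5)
Your proof is correct and follows essentially the same route as the paper: both reduce to the long exact homotopy sequence of the circle bundle and use an exceptional curve $E \subset R_{8}$, on which $[\omega]$ evaluates to $1$, to see that $\pi_{2}(M) \to \pi_{1}(S^{1})$ is onto. The only cosmetic difference is that the paper establishes this surjectivity by restricting the bundle to $E$ and recognizing a Hopf fibration, whereas you identify the connecting map with the Euler-class pairing via the classifying map and Hurewicz and then compute $\langle c_{1}(R_{8}), [E] \rangle = 1$ by adjunction.
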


\begin{proof}
	Let $E \cong \cps^{1}$ be an exceptional divisor in $R_{8}$
	and consider it as a complex curve in $M$.
	Then the restriction of $- [\omega]$ to $E$
	coincides with the first Chern class of the tautological line bundle over $E$.
	Hence $(P_{M})|_{E} \to E$ is a Hopf fibration.
	Consider the following commutative diagram with exact rows.
	\begin{equation}
	\begin{tikzcd}
		\pi_{2}((P_{M})|_{E}) \arrow[r] & \pi_{2}(E) \arrow[r] \arrow[d] & \pi_{1}(S^{1}) \arrow[r] \arrow[d]
			& \pi_{1}((P_{M})|_{E}) \arrow[d] & \\
		 & \pi_{2}(M) \arrow[r] & \pi_{1}(S^{1}) \arrow[r] & \pi_{1}(P_{M}) \arrow[r] & \pi_{1}(M) = 0
	\end{tikzcd}
	\end{equation}
	Since $(P_{M})|_{E} \to E$ is a Hopf fibration,
	$\pi_{1}((P_{M})|_{E}) = 0$ and $\pi_{2}((P_{M})|_{E}) = 0$,
	and so the map $\pi_{2}(E) \to \pi_{1}(S^{1})$ is an isomorphism.
	Thus we have $\pi_{2}(M) \to \pi_{1}(S^{1})$ is surjective
	and $\pi_{1}(P_{M}) = 0$.
\end{proof}

On the other hand,
let $\wtJ_{1}$ and $- \wtxo_{1}$ be the complex structure
and a \Kahler form in the class $- c_{1}(B)$ giving an Einstein metric of negative scalar curvature on $B$.
Denote by $\widetilde{M}^{\prime}$ the complex $3$-manifold
$(B \times \cps^{1}, (- \wtJ_{1}) \times J_{2})$.
The two-form $\wtxo^{\prime} = \wtxo_{1} + \omega_{2}$ defines
a \Kahler form on $\wtM^{\prime}$ with constant scalar curvature $- 2 \pi$.
Consider the \Kahler manifold
\begin{equation}
	(\widetilde{M} \coloneqq \widetilde{M}^{\prime} \times \cps^{n - 3},
	\widetilde{\omega} \coloneqq \widetilde{\omega}^{\prime} + \omega_{3}).
\end{equation}
The scalar curvature of this manifold is given by
$- 2 \pi + 2 (n - 3) \pi / n < 0$.
Denote by $\wtxps$ the diffeomorphism
\begin{equation}
	\psi \times \id_{\cps^{n - 3}} \colon (R_{8} \times \cps^{1}) \times \cps^{n - 3}
		\to (B \times \cps^{1}) \times \cps^{n - 3}.
\end{equation}
Then $\wtxps^{\ast} ([\wtxo]) = [\omega]$
since $\psi^{\ast}([\wtxo^{\prime}]) = [\omega_{1}] + [\omega_{2}]$.
Hence there exists a connection form $\wtxth$ of $P_{M}$
and the lifted CR structure $\wtJ$
such that $d \wtxth / 2 \pi = \wtxps^{\ast} \wtxo$.
We derive from \cref{prop:circle-bundle-over-Hodge-manifold} that
$(P_{M}, \wtH \coloneqq \Ker \wtxth, \wtJ)$ has negative CR Yamabe constant.

Before the proof of \cref{thm:inequivalent-CR-manifold},
we recall some facts on contact geometry.
Two cooriented contact manifolds $(X, H)$ and $(X^{\prime}, H^{\prime})$ are isomorphic
if there exists a diffeomorphism $\psi \colon X \to X^{\prime}$
preserving contact structures and coorientation.
Moreover,
the first Chern class of a strongly pseudoconvex CR manifold
is an invariant of the underlying cooriented contact structure;
see~\cite{Geiges2008}*{Section 2.4} for example.

\begin{proof}[Proof of \cref{thm:inequivalent-CR-manifold}]
	It remains to show that $(P_{M}, H)$ is not isomorphic to $(P_{M}, \widetilde{H})$
	as cooriented contact manifolds.
	Denote by $p \colon P_{M} \to M$ the projection from $P_{M}$ to $M$.
	Then
	\begin{equation}
		c_{1}(P_{M}, H, J)
		= p^{\ast} c_{1}(M),
		\qquad
		c_{1}(P_{M}, \wtH, \wtJ)
		= p^{\ast} \wtxps^{\ast} c_{1}(\wtM).
	\end{equation}
	Now consider the Gysin exact sequence
	\begin{equation}
	\label{eq:Gysin-exact-sequence}
		H^{0}(M; \bbZ) = \bbZ \xrightarrow{- [\omega]} H^{2}(M; \bbZ)
		\xrightarrow{p^{\ast}} H^{2}(P_{M}; \bbZ).
	\end{equation}
	We first consider the case $n = 3$.
	Since $c_{1}(R_{8})$ and $[\omega] = c_{1}(R_{8}) + c_{1}(\cps^{1})$
	are linearly independent in $H^{2}(M; \bbZ)$,
	we have
	\begin{gather}
		c_{1}(P_{M}, H, J)
		= p^{\ast} c_{1}(M)
		= 0, \\
		c_{1}(P_{M}, \wtH, \wtJ)
		= p^{\ast} \wtxps^{\ast} c_{1}(\wtM)
		= - 2 p^{\ast} c_{1}(R_{8})
		\neq 0.
	\end{gather}
	Hence $(P_{M}, H)$ is not isomorphic to $(P_{M}, \wtH)$.

	In the remainder of the proof,
	we assume that $n \geq 4$.
	In this case,
	\begin{equation}
		c_{1}(P_{M}, H, J)
		= p^{\ast} c_{1}(M)
		= - (n - 1) (n - 2) p^{\ast} c_{1}(\calO_{\cps^{n - 3}}(1)).
	\end{equation}
	In particular,
	$[c_{1}(P_{M}, H, J)] = 0$ in $H^{2}(P_{M}; \bbZ) / (n - 1) H^{2}(P_{M}; \bbZ)$.
	It suffices to show that $[c_{1}(P_{M}, \wtH, \wtJ)] \neq 0$
	in $H^{2}(P_{M}; \bbZ) / (n - 1) H^{2}(P_{M}; \bbZ)$.
	Suppose to the contrary that
	$[c_{1}(P_{M}, \wtH, \wtJ)] = [p^{\ast} \wtxps^{\ast} c_{1}(\wtM)] = 0$
	in $H^{2}(P_{M}; \bbZ) / (n - 1) H^{2}(P_{M}; \bbZ)$.
	Consider the following exact sequence:
	\begin{equation}
		H^{0}(M; \bbZ) = \bbZ \xrightarrow{- [\omega]} H^{2}(M; \bbZ) / (n - 1) H^{2}(M; \bbZ)
		\xrightarrow{p^{\ast}} H^{2}(P_{M}; \bbZ) / (n - 1) H^{2}(P_{M}; \bbZ).
	\end{equation}
	This yields that there exists $k \in \bbZ$ such that
	\begin{equation}
		\wtxps^{\ast} c_{1}(\wtM) + k [\omega]
		= - c_{1}(R_{8}) + c_{1}(\cps^{1}) + c_{1}(\cps^{n - 3}) + k [\omega]
		\in (n - 1) H^{2}(M; \bbZ)
	\end{equation}
	Hence
	\begin{equation}
		\langle - c_{1}(R_{8}) + c_{1}(\cps^{1}) + c_{1}(\cps^{n - 3}) + k [\omega], a \rangle \equiv 0
		\quad \textrm{mod} \ n - 1
	\end{equation}
	for any $a \in H_{2}(M; \bbZ)$.
	Let $E \cong \cps^{1}$ be an exceptional divisor in $R_{8}$
	and consider it as a complex curve in $M$.
	Taking $a = [E]$ gives that
	\begin{equation}
		0
		\equiv \langle - c_{1}(R_{8}) + c_{1}(\cps^{1}) + c_{1}(\cps^{n - 3}) + k [\omega], [E] \rangle
		= k - 1
		\quad \textrm{mod} \ n - 1.
	\end{equation}
	Consider also a projective line $L \subset \cps^{n - 3}$,
	which is seen as a complex curve in $M$.
	Then
	\begin{align}
		0
		\equiv \langle - c_{1}(R_{8}) + c_{1}(\cps^{1}) + c_{1}(\cps^{n - 3}) + k [\omega], [L] \rangle
		&= (n - 2) + k n (n - 2) \\
		&\equiv n - 3 \quad \textrm{mod} \ n - 1,
	\end{align}
	which is a contradiction.
\end{proof}

\section*{Acknowledgements}
The first author would like to thank Professor Pak Tung Ho for a kind discussion at KIAS,
and Professor Chong-Kyu Han for introducing him to CR geometry and warm encouragement on this work.

\bibliography{my-reference,my-reference-preprint}

\end{document}